\tikzstyle{vertex}=[circle, draw, inner sep=0pt, minimum size=4pt]
\newtheorem{theorem}{Theorem}[section]
\newtheorem{proposition}[theorem]{Proposition}
\newtheorem{lemma}[theorem]{Lemma}
\theoremstyle{definition}
\newtheorem{corollary}[theorem]{Corollary}
\newtheorem{definition}[theorem]{Definition}
\newtheorem{example}[theorem]{Example}
\newtheorem{problem}[theorem]{Problem}
\theoremstyle{remark}
\newtheorem{remark}[theorem]{Remark}
\newcounter{x}
\newcounter{y}
\newcounter{z}
\newcommand\xaxis{210}
\newcommand\yaxis{-30}
\newcommand\zaxis{90}
\newcommand\topside[3]{
  \fill[fill=white, draw=black,shift={(\xaxis:#1)},shift={(\yaxis:#2)},
  shift={(\zaxis:#3)}] (0,0) -- (30:1) -- (0,1) --(150:1)--(0,0);
}
\newcommand\leftside[3]{
  \fill[fill=cyan, draw=black, shift={(\xaxis:#1)},shift={(\yaxis:#2)},
  shift={(\zaxis:#3)}] (0,0) -- (0,-1) -- (210:1) --(150:1)--(0,0);
}
\newcommand\rightside[3]{
  \fill[fill=lightgray, draw=black,shift={(\xaxis:#1)},shift={(\yaxis:#2)},
  shift={(\zaxis:#3)}] (0,0) -- (30:1) -- (-30:1) --(0,-1)--(0,0);
}
\newcommand\rrightside[3]{
  \fill[fill=white, draw=black,shift={(\xaxis:#1)},shift={(\yaxis:#2)},
  shift={(\zaxis:#3)}] (0,0) -- (30:1) -- (-30:1) --(0,-1)--(0,0);
}
\newcommand\cube[3]{
  \topside{#1}{#2}{#3} \leftside{#1}{#2}{#3} \rightside{#1}{#2}{#3}
}
\newcommand\ccube[3]{
  \topside{#1}{#2}{#3} \leftside{#1}{#2}{#3} 
}
\newcommand\planepartition[1]{
 \setcounter{x}{-1}
  \foreach \a in {#1} {
    \addtocounter{x}{1}
    \setcounter{y}{-1}
    \foreach \b in \a {
      \addtocounter{y}{1}
      \setcounter{z}{-1}
      \foreach \c in {0,...,\b} {
        \addtocounter{z}{1}
      \ifthenelse{\c=0}{\setcounter{z}{-1},\addtocounter{y}{0}}{
        \cube{\value{x}}{\value{y}}{\value{z}}}
      }
    }
  }
}
\newcommand\pplanepartition[1]{
 \setcounter{x}{-1}
  \foreach \a in {#1} {
    \addtocounter{x}{1}
    \setcounter{y}{-1}
    \foreach \b in \a {
      \addtocounter{y}{1}
      \setcounter{z}{-1}
      \foreach \c in {0,...,\b} {
        \addtocounter{z}{1}
      \ifthenelse{\c=0}{\setcounter{z}{-1},\addtocounter{y}{0}}{
        \ccube{\value{x}}{\value{y}}{\value{z}}}
      }
    }
  }
}
\title[Enumeration of plane partitions by descents]{
Enumeration of plane partitions by descents  
} 
\author[Damir Yeliussizov]{Damir Yeliussizov}
\address{KBTU, Almaty, Kazakhstan}
\email{\href{mailto:yeldamir@gmail.com}{yeldamir@gmail.com}}
\begin{document}

\begin{abstract}
We study certain bijection between plane partitions and $\mathbb{N}$-matrices. 
As applications, we prove a Cauchy-type  
identity for generalized dual Grothendieck polynomials. We introduce two statistics on plane partitions, whose generating functions are similar to classical MacMahon's formulas; one of these statistics is equidistributed with the usual volume. We also show natural connections with the longest increasing subsequences of words. 
\end{abstract}

\maketitle


\section{Introduction}

One of the most successful bijections in combinatorics is the {\it Robinson-Schensted-Knuth} (RSK) correspondence. It has many important properties and applications, see \cite[Ch.~7 \& notes]{sta}. In particular, the RSK gives a bijection between {\it $\mathbb{N}$-matrices} 
and {\it plane partitions}. 

In this paper, we study a different (yet somewhat analogous) bijection between these sets. 
A map from plane partitions to matrices is quite transparent; 
it is described by recording the so-called {\it descent level sets} (or can be viewed by projecting corners in $\mathbb{R}^3$ presentation). 

This approach gives a number of interesting consequences. We obtain the following results:

\begin{itemize}
\item A multivariate identity via certain {\it descent} statistics 
of plane partitions. 
The identity  
becomes a source of several subsequent formulas. 
\item Generating functions for two volume-type statistics. We call them the {\it up-hook } and {\it corner volumes}. The formulas are analogous to classical MacMahon's identities. 
Interestingly, the up-hook volume is equidistributed with the usual volume of plane partitions.
\item A Cauchy-type identity for generalized {\it dual symmetric Grothendieck polynomials} which are $K$-theoretic inhomogeneous deformations of Schur functions. 
\item A Frobenius-type identity for {\it strict tableaux} which are generalizations of the standard Young tableaux (SYT). We show that these objects are naturally related to the {\it longest increasing subsequences} of words. 
\item Monotonicity and bounds for certain descent enumeration functions of plane partitions. 
\end{itemize}

\section{Preliminaries}
\subsection{Partitions} A {\it partition} is a sequence $\lambda = (\lambda_{1}, \ldots, \lambda_{\ell})$ of positive integers $\lambda_1 \ge \cdots \ge \lambda_{\ell}$, where $\ell(\lambda) = \ell$ is the {\it length} of $\lambda$. Every partition $\lambda$ can be represented as the {\it Young diagram} $\{(i,j): i \in [1, \ell], j\in [1,\lambda_{i}], (i,j) \in \mathbb{N}^2 \}$.

\subsection{Plane partitions}
A {\it plane partition} $\pi = (\pi_{i j})_{i,j \ge 1}$ is a two-dimensional array of nonnegative integers with finitely many nonzero entries such that
$$
\pi_{i\, j} \ge \pi_{i+1\, j}, \quad \pi_{i\, j} \ge \pi_{i\, j +1}, \quad \text{ for all } i,j \ge 1.
$$

We denote by $|\pi| := \sum_{i,j} \pi_{i j}$ the {\it volume} (or size, or total weight) of $\pi$. 

By default we ignore zero entries of plane partitions. The {\it shape} of $\pi$ denoted by $\mathrm{sh}(\pi)$ is the partition whose Young diagram is $\{(i,j) :  \pi_{ij} > 0\}$. 
It is useful to view plane partitions as a pile of cubes in $\mathbb{R}^3$ as in Fig.~\ref{fig0}. They can be bounded by a box.  
Let $\mathrm{PP}(k,n, m)$ be the set of plane partitions that fit in the box $k \times n \times m$, i.e. the length of the first row is at most $k$, the length of the first column is at most $n$, and the first entry is at most $m$. Let also $\mathrm{PP}'(k, n, m)$ be the set of plane partitions whose base shape is {\it exactly} $k \times n$, i.e. the first row length is $k$, the first column length is $n$, and the first entry is still at most $m$.  

\begin{figure}
\ytableausetup{aligntableaux = bottom}
\begin{ytableau}
 4 & {3} & {2} \\
 {3} & 3 & {1} \\  
\end{ytableau}
\qquad
\begin{tikzpicture}[scale = 0.35]
\planepartition{
{4,3,2},
{3,3,1}}
\end{tikzpicture}
\caption{A plane partition $\pi \in \mathrm{PP}(3,2,4)$ with $|\pi| = 16$, $\mathrm{sh}(\pi) = (3,3)$, and its boxed presentation.
}\label{fig0}
\end{figure}
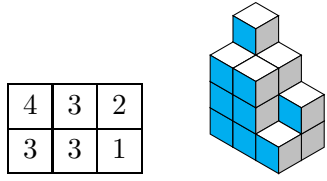

We have classical MacMahon's formula \cite{macmahon}:
\begin{align}
\sum_{\pi\, \in\, \mathrm{PP}(k, n, m)} q^{|\pi|} = \prod_{i = 1}^{k} \prod_{j = 1}^{n} \prod_{\ell = 1}^{m} \frac{1 - q^{i + j + \ell - 1}}{1 - q^{i + j + \ell - 2}}
\end{align}
which for $k,n,m \to \infty$ gives the generating function
\begin{align}
\sum_{\pi} q^{|\pi|} &= \prod_{n = 1}^{\infty} \frac{1}{(1 - q^n)^n}.
\end{align}
For $q \to 1$, we have MacMahon's famous theorem on boxed plane partitions 
\begin{align}\label{mm}
|\mathrm{PP}(k, n, m)| = \prod_{i = 1}^{k} \prod_{j = 1}^{n} \prod_{\ell = 1}^{m} \frac{i + j +\ell - 1}{i + j + \ell - 2} 
\end{align}
We refer to \cite[Ch.7]{sta} for enumerative aspects of plane partitions and to an excellent more recent survey \cite{krat}. 

\subsection{Schur polynomials}
The {\it Schur polynomials} $\{s_{\lambda}\}$ can be defined as follows:
$$
s_{\lambda}(x_1, \ldots, x_m) := \sum_{\pi\, \in\, \mathrm{SPP}(\lambda)} \prod_{i = 1}^{m} x_i^{a_i(\pi)},
$$ 
where $\mathrm{SPP}(\lambda)$ is the set of {\it column-strict plane partitions} of shape $\lambda$ (i.e. $\pi_{i j} > \pi_{i+1\, j}$) with the largest entry at most $m$, 
and $a_i(\pi)$ is the number of entries $i$ in $\pi$.

The following identities are known:
\begin{align}\label{qschur}
\sum_{\pi\, \in\, \mathrm{PP}(k, n, m)} q^{|\pi|} = q^{-n \binom{k +1}{2}} s_{\rho}(q, q^2, \ldots, q^{n + m}),
\end{align}
where $\rho = (k, \ldots, k) := (k^n)$ is a rectangular partition; for $q = 1$ this gives 
\begin{align}
|\mathrm{PP}(k, n, m)| = s_{\rho}(1^{n+m}),
\end{align}
where 
$(1^n) = (1, \ldots, 1)$ repeated $n$ times.  

\subsection{Dual Grothendieck polynomials}
These functions are certain $K$-theoretic deformations of Schur polynomials. They were first explicitly described and studied in \cite{lp}. 
\begin{definition}  
The {\it dual symmetric Grothendieck polynomials} $\{ g_{\lambda} \}$ can be defined as follows
$$
g_{\lambda}(x_1, \ldots, x_m) := \sum_{\mathrm{sh}(\pi) = \lambda} \prod_{i = 1}^{m} x_i^{c_i(\pi)},
$$
where the sum runs over plane partitions $\pi$ (with largest entry at most $n$) of shape $\lambda$, and 
$c_i(\pi)$ is the number of columns of $\pi$ containing $i$.
\end{definition}

The following properties are well known \cite{lp}: $g_{\lambda}$ is a symmetric polynomial, $g_{\lambda} = s_{\lambda} + \text{lower degree terms}$, and hence $\{g_{\lambda} \}$ forms a basis of the ring $\Lambda$ of symmetric functions.  
More properties of these functions can be found in \cite{lp, dy, dy2}.

From the definition it is not difficult to obtain the following formulas
$$
|\mathrm{PP}(k,n,m)| = g_{\rho}(1^{m+1}), \qquad |\mathrm{PP}'(k,n,m)| = g_{\rho}(1^{m}),
$$
where again $\rho = (k^n)$. They are a special case of more general identities that we prove in this paper, e.g. we will prove formulas similar to \eqref{qschur}.

\section{A bijection between plane partitions and $\mathbb{N}$-matrices}
An {\it $\mathbb{N}$-matrix} is a matrix of nonnegative integers with only finitely many nonzero elements. Define the map $\Phi : \{\text{plane partitions}\} \to \{\mathbb{N}\text{-matrices}\}$ as follows. Given a plane partition $\pi$, define the {\it descent level sets} 
\begin{align}\label{ldes}
D_{i \ell} := \{ j : \pi_{i j} = \ell  > \pi_{i+1\, j} \},
\end{align}
i.e. $D_{i \ell}$ is the set of column indices of the entry $\ell$ in the $i$th row of $\pi$ that are strictly larger than the entry below. Let $d_{i \ell} := |D_{i \ell}|$ and define the matrix $D := (d_{i \ell})_{i, \ell \ge 1}$.
Set 
\begin{align}\label{piq}
\Phi(\pi) = D.
\end{align}

\begin{example}\label{ex1} When $\pi$ is the plane partition shown below, ~ 
\begin{center}
{\small
$\Phi : $
\ytableausetup{aligntableaux = center}
\begin{ytableau}
 4 & {4} & {2} \\
 {4} & 2 & {1} \\  
 {2} & {2}
\end{ytableau}
$~\longmapsto
\left(
\begin{matrix}
0 & 1 & 0 & 1\\
1 & 0 & 0 & 1\\
0 & 2 & 0 & 0\\
\end{matrix}
\right)
$
}
\end{center}
\end{example}

Alternatively, we can view the map $\Phi$ geometrically: present $\pi$ as a pile of cubes in $\mathbb{R}^3$; 
mark the corners 
\begin{tikzpicture}[scale = 0.25]
\pplanepartition{{1}}
\end{tikzpicture}
on the surface of $\pi$ with $\bullet$ as in Fig.~\ref{figa}. Then $d_{i \ell}$ is the number of marks with the coordinates $x = i$ and $z = \ell$. 

\begin{figure}
\begin{tikzpicture}[scale = 0.40]
\planepartition{{4,4,2},{4,2,1},{2,2}}
\draw[thick, dashed,->] (0,4) -- (0,5);
\node at (0.5,5) {$z$};
\draw[thick, dashed,->] (-2.6,-1.5) -- (-3.6, -2.1); 
\node at (-4,-2.1) {$x$};
\draw[thick, dashed,->] (2.6,-1.5) -- (3.6, -2.1); 
\node at (4,-2.1) {$y$};

\foreach \c in {1,...,3} {
  \foreach \d in {1,...,4}{
	\rrightside{\c}{-3}{\d};
  }
}
\draw[thick, dashed,->] (-2.6-2.7,-1.5+1.5) -- (-3.6-2.7, -2.1+1.5); 
\node at (-4-2.7,-2.1+1.5) {$i$};

\draw[thick, dashed,->] (0-2.6,4+1.5) -- (0-2.6,5+1.5);
\node at (0.5-2.7,5+1.5) {$\ell$};

\node at (1.4,0.25) {{\scriptsize {$\bullet$}}};
\draw[dashed, gray] (1.4, 0.2) to (-3.5, 3);
\node at (-3.65,3.25) {{\scriptsize $1$}};
\node at (-3.65+0.2,3.25-0.25) {{\scriptsize $\bullet$}};

\node at (-1.2,-0.25) {{\scriptsize {$\bullet$}}};
\draw[dashed, gray] (1.4-2.65, 0.2-0.45) to (-3.5-2.65+1, 3-0.45-0.6);
\node at (-3.5-2.65+1-0.3, 3-0.45-0.6+0.25) {{\scriptsize $2$}};
\node at (-3.5-2.65+1-0.05, 3-0.45-0.6+0.05) {{\scriptsize $\bullet$}};

\node at (-2.1,0.25) {{\scriptsize {$\bullet$}}};

\node at (-1.2,2.75) {{\scriptsize {$\bullet$}}};
\draw[dashed, gray] (1.4-2.6, 0.2+2.5) to (-3.5-2.6+1.75, 3+2.5-1);
\node at (-3.5-2.6+1.75, 3+2.5-1) {{\scriptsize {$\bullet$}}};
\node at (-3.5-2.6+1.75-0.25, 3+2.5-1+0.25) {{\scriptsize {$1$}}};

\node at (0.5,2.75) {{\scriptsize {$\bullet$}}};
\draw[dashed, gray] (1.4-0.9, 0.2+2.5) to (-3.5-0.9+0.95, 3+2.5-0.5);
\node at (-3.5-0.9+0.95, 3+2.5-0.5) {{\scriptsize {$\bullet$}}};
\node at (-3.5-0.9+0.95-0.25, 3+2.5-0.5+0.25) {{\scriptsize {$1$}}};

\node at (-3.5-0.9+0.95-0.25-3, 3+2.5-0.5+0.25-1) {{\scriptsize {$D = (d_{i \ell})$}}};

\node at (0.5,-1.25) {{\scriptsize {$\bullet$}}};
\draw[dashed, gray] (1.4-0.9, 0.2-1.5) to (1.4-2.65, 0.2-0.45);
\node at (-3.5-2.65+1-0.25+1-0.1, 3-0.45-0.6+0.25-1+0.55) {{\scriptsize $1$}};
\node at (-3.5-2.65+1-0.25+1+0.08, 3-0.45-0.6+0.25-1+0.3) {{\scriptsize $\bullet$}};
\end{tikzpicture}
\caption{A geometric view of the map $\Phi$.}\label{figa}
\end{figure}
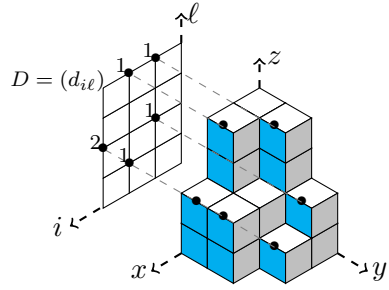

\begin{theorem}\label{bij}
The map $\Phi$ defines a bijection between the set $\mathrm{PP}(\infty, n, m)$ of plane partitions with at most $n$ rows and entries not exceeding $m$ and $\mathbb{N}$-matrices with $n$ rows and $m$ columns.
\end{theorem}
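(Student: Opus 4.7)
The plan is to exhibit an explicit inverse $\Psi$ to $\Phi$, reconstructing a plane partition $\pi$ from its matrix $D$ row by row, from the bottom up. I would first check the image bound: if $\pi$ has at most $n$ rows and entries in $[1,m]$, then $d_{i\ell} = 0$ whenever $i > n$ or $\ell > m$, so $\Phi(\pi)$ is indeed supported on an $n \times m$ grid.

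For the inverse, I would initialize $\pi^{(n+1)} := (0,0,\ldots)$ and, assuming the row $\mu := \pi^{(i+1)}$ has been constructed, define the next row $\lambda := \pi^{(i)}$ using the descent multiset $M_i$ in which each value $\ell \in [1,m]$ appears with multiplicity $d_{i\ell}$. The construction hinges on the following key assertion: given a weakly decreasing sequence $\mu$ (extended by zeros) and a finite multiset $M$ of positive integers, there is a unique weakly decreasing sequence $\lambda$ such that $\lambda_j \geq \mu_j$ for all $j$ and the values $\{\lambda_j : \lambda_j > \mu_j\}$, counted with multiplicity, form exactly $M$.

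Existence will be by a greedy construction: writing $M$ in weakly decreasing order $v_1 \geq \cdots \geq v_k$, place $v_t$ at the smallest index $j_t > j_{t-1}$ (with $j_0 := 0$) for which $\mu_{j_t} < v_t$, and set $\lambda_j := \mu_j$ at every other index. Weak decrease of $\mu$ together with the minimality of $j_t$ force $\mu_j \in [v_t, v_{t-1}]$ for $j_{t-1} < j < j_t$, so $\lambda$ is weakly decreasing. For uniqueness, any valid $\lambda$ must display its descent values in weakly decreasing order from left to right, so their sorted sequence is forced to be $(v_1, \ldots, v_k)$. An induction on $t$ then pins down the positions: placing $v_t$ strictly left of $j_t$ conflicts with $\mu \geq v_t$ there, while placing it strictly right leaves $j_t$ unpinned, forcing $\lambda_{j_t} = \mu_{j_t} < v_t$ and contradicting weak decrease of $\lambda$.

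Iterating the key lemma produces $\Psi(D)$; the output is a plane partition with at most $n$ rows and entries in $[1,m]$, since each $\pi^{(i)}$ is weakly decreasing, dominates $\pi^{(i+1)}$ componentwise, and inherits entries only from $\mu$ or from $M_i \subseteq [1,m]$. The mutual inverse identities $\Phi \circ \Psi = \mathrm{id}$ and $\Psi \circ \Phi = \mathrm{id}$ then follow row by row from the uniqueness in the lemma. The main obstacle I anticipate is precisely this uniqueness --- showing that the unordered descent multiset $M_i$, together with the known row below, already pins down both positions and values of the new row; once this is established, everything else is bookkeeping.
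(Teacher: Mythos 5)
Your proposal is correct, and it inverts $\Phi$ by a genuinely different construction than the paper's. The paper builds $\Phi^{-1}(D)$ by scanning the matrix value-by-value (columns $\ell=m,\ldots,1$, rows $i=n,\ldots,1$) and, for each unit of $d_{i\ell}$, filling the first column of the partial plane partition of length less than $i$ with entries $\ell$ up to length $i$; correctness of this column-insertion procedure is then asserted as easy to check. You instead reconstruct the plane partition row by row from the bottom, and your key lemma --- that a weakly decreasing row $\mu$ together with the unordered multiset of descent values determines the row above uniquely --- is exactly the point the paper glosses over; your greedy placement argument (the $t$-th largest descent value must sit at the first admissible position after the previous one, since placing it earlier contradicts $\mu_j\ge v_t$ there and placing it later violates weak decrease at the skipped position) is sound and simultaneously gives well-definedness of the inverse, injectivity, and surjectivity. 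What the paper's route buys is an explicit insertion algorithm that matches the geometric picture (stacking columns of cubes of a given height) and is convenient for the worked example; what your route buys is a cleaner logical structure, since uniqueness of each row is proved rather than left implicit, and the two inverse identities $\Phi\circ\Psi=\mathrm{id}$ and $\Psi\circ\Phi=\mathrm{id}$ reduce transparently to that one lemma by downward induction on the row index.
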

\begin{proof}
It is obvious that for $\pi \in \mathrm{PP}(\infty, n, m)$, the map $\Phi$ produces an $n \times m$ $\mathbb{N}$-matrix. 

Let us describe the inverse map $\Phi^{-1}$. Given an $n \times m$ $\mathbb{N}$-matrix $D$, we show how to uniquely reconstruct a plane partition $\pi  \in \mathrm{PP}(\infty, n, m)$ such that $\Phi(\pi) = D$. We will build $\pi$ sequentially by scanning the columns of $D$ starting from the last one. 

Let us first show how to {\it add} element $\ell$ in $i$-th row of some plane partition $\pi'$. To do so, find the first available column of $\pi'$ whose length is less than $i$, then add the elements $\ell$ in this column so that its length becomes $i$. For example, suppose $\pi'$ already had the following shape and we want to add $\ell$ in the 3rd row:
\begin{center}
{\small
\begin{ytableau}
 ~ & ~ & {~} \\
 {~} & ~  \\  
 {~} & {~} \\
 ~ & ~
\end{ytableau}
add $\ell$ in row $3$ $\mapsto$
\begin{ytableau}
 ~ & {~} & {~} \\
 ~ & ~ & {\ell} \\  
 {~} & {~} & {\ell} \\
 ~ & ~
\end{ytableau}
}
\end{center}

Let us initially set $\pi = \varnothing$. For each $\ell = m, m-1, \ldots, 1$ consider the $\ell$-th column $(d_{1 \ell }, \ldots, d_{n \ell })^T$ of $D$. 
For each $i = n, n-1, \ldots, 1$ add $\ell$ in $i$-th row of $\pi$ exactly $d_{i \ell}$ times. It is easy to see that after this procedure, the resulting $\pi$ is actually a plane partition with at most $n$ rows, the entries not exceeding $m$, and 
that $\pi$ has the desired property, $\Phi(\pi) = D$.
\end{proof}

\begin{example}\label{ex3}
Let us see how $\Phi^{-1}$ applies to the matrix from Example~\ref{ex1}. We have: 
\begin{center}
{\small
$ 
D=
\left(
\begin{matrix}
0 & 1 & 0 & 1\\
1 & 0 & 0 & 1\\
0 & 2 & 0 & 0\\
\end{matrix}
\right) 
$
}
\end{center}
and hence $\pi$ is constructed as follows: 

{\small
add column 4 of $D$: \quad add $4$ in row $2$:  
\begin{ytableau}
 4  \\
 {4}   
\end{ytableau}
\quad add $4$ in row $1$:  
\begin{ytableau}
 4  & 4 \\
 {4}   
\end{ytableau}

add column 3 of $D$: \quad no addition

add column 2 of $D$: \quad add $2$ in row $3$ two times:\quad  
\begin{ytableau}
 4  & 4\\
 {4}  & 2\\ 
 2 & 2
\end{ytableau}
\quad add $2$ in row $1$: 
\begin{ytableau}
 4  & 4 & 2\\
 {4}  & 2\\ 
 2 & 2
\end{ytableau}

add column 1 of $D$: \quad add $1$ in row $2$:
\begin{ytableau}
 4  & 4 & 2\\
 {4}  & 2 & 1\\ 
 2 & 2
\end{ytableau} $ = \pi$
}
\end{example}

Let us point out a property of the bijection $\Phi$ that we showed in \cite{dy4} for probabilistic applications. 
The length of the first row of $\pi$ is equal to the maximal weight down-right path (i.e. with steps $(i,j) \to (i+1, j), (i, j+1)$) in $D = \Phi(\pi)$ from $(1,1)$ to $(n,m)$. 
This is similar to the classical RSK account, see \cite{pak, sag}. 

\section{A multivariate identity}

The following notion (related to the previous notion of descent level sets) will be important for us and used throughout. For a plane partition $\pi$, define the {\it descent set} 
\begin{align*}
\mathrm{Des}(\pi) := \{(i,j) : \pi_{i j} > \pi_{i+1\, j}\}.
\end{align*}
Denote also $\mathrm{des}(\pi) = |\mathrm{Des}(\pi)|$. 

As a first application of the bijection $\Phi$ described in previous section we prove the following identity which is a source of several formulas given later.
\begin{theorem}\label{mult}
The following formula holds:
\begin{align*}
\sum_{\pi} \prod_{(i,j)\, \in\, \mathrm{Des}(\pi)}  x_i z_{\pi_{i j}} = \prod_{i = 1}^{n} \prod_{\ell = 1}^{m} \frac{1}{1 - x_i z_\ell},
\end{align*}
where the sum runs over plane partitions $\pi \in \mathrm{PP}(\infty, n,m)$.
\end{theorem}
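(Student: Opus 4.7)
The plan is to invoke the bijection $\Phi$ from Theorem~\ref{bij} to turn the sum over plane partitions into a sum over $\mathbb{N}$-matrices, where the weight factorizes completely.

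First I would observe that for a fixed plane partition $\pi$, the descent set decomposes as a disjoint union
$$\mathrm{Des}(\pi) = \bigsqcup_{i\ge 1,\ \ell \ge 1} \{i\} \times D_{i\ell},$$
since each descent position $(i,j) \in \mathrm{Des}(\pi)$ sits in a unique row $i$ and has a unique value $\ell = \pi_{ij}$, placing $j$ in the corresponding descent level set $D_{i\ell}$ from \eqref{ldes}. Therefore the monomial attached to $\pi$ can be rewritten in terms of the matrix entries $d_{i\ell} = |D_{i\ell}|$:
$$\prod_{(i,j)\,\in\,\mathrm{Des}(\pi)} x_i\, z_{\pi_{ij}} \;=\; \prod_{i=1}^{n}\prod_{\ell=1}^{m} (x_i z_\ell)^{d_{i\ell}}.$$

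Next I would apply Theorem~\ref{bij}: the map $\pi \mapsto \Phi(\pi) = D = (d_{i\ell})$ is a bijection between $\mathrm{PP}(\infty,n,m)$ and the set of $n\times m$ $\mathbb{N}$-matrices. Hence the left-hand side becomes
$$\sum_{\pi\,\in\,\mathrm{PP}(\infty,n,m)} \prod_{i,\ell}(x_i z_\ell)^{d_{i\ell}} \;=\; \sum_{D \in \mathbb{N}^{n \times m}} \prod_{i=1}^{n}\prod_{\ell=1}^{m}(x_i z_\ell)^{d_{i\ell}}.$$
The sum over $\mathbb{N}$-matrices splits as a product of independent geometric series in each entry $d_{i\ell}$, yielding
$$\prod_{i=1}^{n}\prod_{\ell=1}^{m} \sum_{d \ge 0}(x_i z_\ell)^d \;=\; \prod_{i=1}^{n}\prod_{\ell=1}^{m}\frac{1}{1 - x_i z_\ell},$$
which is exactly the right-hand side.

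There is essentially no obstacle here beyond checking the weight identification, since Theorem~\ref{bij} does all the heavy lifting; the key conceptual point is that the bijection $\Phi$ was designed precisely so that the descent statistics of $\pi$ coincide with the entries of $\Phi(\pi)$, making the weight multiplicative over the independent matrix entries. The mild bookkeeping issue is to confirm convergence is not an issue at the formal power series level: only finitely many $d_{i\ell}$ are nonzero for a given matrix (equivalently, the plane partition has finite support), so the identity is a genuine equality of formal power series in $x_1,\ldots,x_n,z_1,\ldots,z_m$.
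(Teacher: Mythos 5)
Your proposal is correct and follows essentially the same route as the paper: it uses the bijection $\Phi$ of Theorem~\ref{bij} together with the observation that the descent-set weight of $\pi$ equals $\prod_{i,\ell}(x_i z_\ell)^{d_{i\ell}}$, then evaluates the resulting sum over $\mathbb{N}$-matrices as a product of geometric series. The only cosmetic difference is that you pass from plane partitions to matrices while the paper starts from the matrix generating function and applies $\Phi^{-1}$.
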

\begin{proof}
Let us write the generating function
\begin{align}\label{cau0}
\sum_{D} \prod_{i = 1}^{n} \prod_{\ell = 1}^{m} (x_i z_{\ell})^{d_{i \ell}} = \prod_{i = 1}^n \prod_{\ell = 1}^{m} \frac{1}{1- x_i z_\ell}
\end{align}
over all $\mathbb{N}$-matrices $D = (d_{i \ell})$ with $n$ rows and $m$ columns, $1 \le i \le m, 1 \le \ell \le n$. Applying the bijection $\Phi^{-1}$, the l.h.s of \eqref{cau0} rewrites as follows
\begin{align*}
\sum_{D} \prod_{i = 1}^{n} \prod_{\ell = 1}^{m} (x_i z_\ell)^{d_{i \ell}} = \sum_{\pi} \prod_{i = 1}^{n} \prod_{\ell = 1}^{m} (x_i z_\ell)^{|\{j\,  :\, \pi_{i j} = \ell > \pi_{i+1\, j} \}|} = \sum_{\pi} \prod_{(i, j) \in \mathrm{Des}(\pi)}  x_{i} z_{\pi_{i j}}, 
\end{align*}
where the sum is over plane partitions $\pi \in \mathrm{PP}(\infty, n, m)$.
\end{proof}

\section{Generalized dual Grothendieck polynomials}
\begin{definition}
Define the polynomials $g_{\lambda}(\mathbf{x}; \mathbf{z})$ in two sets of variables  $\mathbf{x} = (x_1, \ldots, x_{n})$ and $\mathbf{z} = (z_1, \ldots, z_m)$ indexed by partitions $\{\lambda\}$ as follows:
\begin{align}\label{gdef}
g_{\lambda}(\mathbf{x}; \mathbf{z}) := \sum_{\mathrm{sh}(\pi) = \lambda} \,\prod_{(i,j)\, \in\, \mathrm{Des}(\pi)}  x_i z_{\pi_{i j}},
\end{align}
where the sum runs over plane partitions $\pi \in \mathrm{PP}(\infty, n,m)$ of shape $\lambda$.
Note that $g_{\lambda}(\mathbf{x}; \mathbf{z}) = 0$ unless $\ell(\lambda) \le n$.
\end{definition}

\begin{theorem}
The polynomials $g_{\lambda}(\mathbf{x}; \mathbf{z})$ satisfy the following properties:

(i) We have the expansion
\begin{align*}
g_{\lambda}(\mathbf{x}; \mathbf{z}) = \mathbf{x}^{\lambda} s_{\lambda}(\mathbf{z}) + \text{lower degree terms},
\end{align*}
where $\mathbf{x}^{\lambda} = x_1^{\lambda_1} \cdots x_{n}^{\lambda_n}$. 
Moreover, each monomial $\mathbf{x}^{\alpha} \mathbf{z}^{\beta}$ in $g_{\lambda}$ has $|\alpha| = |\beta|$.

\vspace{0.5em}

(ii) We have the formula 
\begin{align}\label{gdc}
g_{\lambda}(\mathbf{x} ; \mathbf{z}) = \sum_{\mathrm{sh}(\pi) = \lambda}\, \mathbf{x}^{d(\pi)} \mathbf{z}^{c(\pi)},\quad \text{ where }\quad \mathbf{x}^{d(\pi)} = \prod_{i = 1}^{n} x_i^{d_i(\pi)},\quad  \mathbf{z}^{c(\pi)} = \prod_{j = 1}^{m} z_j^{c_j(\pi)} 
\end{align}
where the sum runs over plane partitions $\pi \in \mathrm{PP}(\infty, n,m)$ of shape $\lambda$; 
$c_j(\pi)$ is the number of columns of $\pi$ containing $j$ and $d_i(\pi)$ is the number of entries in $i$th row of $\pi$ that are strictly larger than the entry below. 

\vspace{0.5em}

(iii) $g_{\lambda}(1^n ; \mathbf{z}) = g_{\lambda}(\mathbf{z})$ coincides with the dual Grothendieck polynomial for $\ell(\lambda) \le n$. 

\vspace{0.5em}

(iv) The following Cauchy-type identity holds
\begin{align}\label{cauchy}
\sum_{\ell(\lambda) \le n} g_{\lambda}(\mathbf{x} ; \mathbf{z}) =  \prod_{i = 1}^{n} \prod_{j = 1}^{m} \frac{1}{1 - x_i z_j}
\end{align}
\end{theorem}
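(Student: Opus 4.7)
The proof splits into four pieces, and in each case the work is essentially bookkeeping driven by the definition \eqref{gdef} of $g_\lambda(\mathbf{x};\mathbf{z})$, Theorem \ref{mult}, and the Schur polynomial definition recalled in the preliminaries. I would handle the parts in the order (ii), (iii), (i), (iv), since each one feeds the next.

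I would begin with (ii), which amounts to a regrouping of the product $\prod_{(i,j)\in\mathrm{Des}(\pi)} x_i z_{\pi_{ij}}$. The $x$-side collects by row to give $\prod_i x_i^{d_i(\pi)}$ directly from the definition of $d_i(\pi)$. For the $z$-side the observation to record is that within each fixed column $j$ the entries of $\pi$ are weakly decreasing, so every value $\ell$ occupies a (possibly empty) block of consecutive rows of column $j$; a descent $(i,j)$ with $\pi_{ij}=\ell$ lies in that block iff the block is nonempty, and is then unique (it occurs at the bottom of the block). Summing over columns yields $|\{(i,j)\in\mathrm{Des}(\pi):\pi_{ij}=\ell\}|=c_\ell(\pi)$, which is exactly \eqref{gdc}. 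Part (iii) is then immediate: setting $\mathbf{x}=1^n$ in \eqref{gdc} recovers $\sum_{\mathrm{sh}(\pi)=\lambda}\mathbf{z}^{c(\pi)}$, which is the definition of $g_\lambda(\mathbf{z})$ from the preliminaries.

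For (i), the statement $|\alpha|=|\beta|$ is trivial: each descent contributes one $x$- and one $z$-variable, so every monomial in $g_\lambda$ has $|\alpha|=|\beta|=\mathrm{des}(\pi)$ for the contributing $\pi$. The total degree is $2\,\mathrm{des}(\pi)$, which is at most $2|\lambda|$, and equality holds precisely when every cell of the shape is a descent, i.e.\ $\pi_{ij}>\pi_{i+1,j}$ for all $(i,j)\in\lambda$ (the cells just below the shape contribute automatically). This is exactly the column-strict condition. For column-strict $\pi$ one has $d_i(\pi)=\lambda_i$, so the top-degree contribution is $\mathbf{x}^\lambda\sum_\pi\prod_{(i,j)\in\lambda} z_{\pi_{ij}}$ where the sum is over column-strict plane partitions of shape $\lambda$ with entries $\le m$; by the definition of the Schur polynomial this equals $\mathbf{x}^\lambda s_\lambda(\mathbf{z})$. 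All other $\pi$ of shape $\lambda$ satisfy $\mathrm{des}(\pi)<|\lambda|$ and hence contribute strictly lower-degree terms.

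Finally, (iv) is a one-line consequence of Theorem \ref{mult}: summing \eqref{gdef} over all partitions $\lambda$ with $\ell(\lambda)\le n$ groups plane partitions in $\mathrm{PP}(\infty,n,m)$ by shape, so the left-hand side of \eqref{cauchy} equals $\sum_\pi\prod_{(i,j)\in\mathrm{Des}(\pi)} x_i z_{\pi_{ij}}$ with $\pi$ ranging over $\mathrm{PP}(\infty,n,m)$, and Theorem \ref{mult} produces the right-hand product. The only step that is more than pure rewriting is the column-strict identification in (i); because the Schur polynomial has already been defined in terms of column-strict plane partitions, that identification is notational rather than substantive, and I do not anticipate any real obstacle beyond careful accounting.
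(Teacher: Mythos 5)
Your proposal is correct and follows essentially the same route as the paper: (ii) and (iii) by regrouping the descent product by rows and by (column, value) pairs, (i) by identifying the top-degree terms with column-strict plane partitions giving $\mathbf{x}^{\lambda}s_{\lambda}(\mathbf{z})$, and (iv) by summing \eqref{gdef} over shapes and invoking Theorem~\ref{mult}. You simply spell out the counting details (e.g.\ one descent per column per value present) that the paper compresses into ``recounting degrees.''
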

\begin{proof}
(i) A plane partition $\pi$ contributed to the top degree component when the set $\mathrm{Des}(\pi)$ contains all cells of $\pi$, i.e. when $\pi$ is a column-strict plane partition. In this case, the $\mathbf{x}$ monomial will be $\mathbf{x}^{\lambda}$ corresponding to the shape of $\pi$. Summing over such $\pi$, we get 
the Schur polynomial $s_{\lambda}(\mathbf{z})$. 
It is clear by definition that monomial expansions have equal $x$ and $z$ degrees. 

(ii) The expansion \eqref{gdc} easily follows from the definition and recounting degrees.

(iii) This immediately follows from definition of dual Grothendieck polynomials and the formula \eqref{gdc} when setting $\mathbf{x} = 1^n$.

(iv) Using the definition \eqref{gdef} and Theorem~\ref{mult} we have
$$
\sum_{\lambda} g_{\lambda}(\mathbf{x}; \mathbf{z}) = \sum_{\pi} \prod_{(i, j) \in \mathrm{Des}(\pi)}  x_{i} z_{\pi_{i j}}  =  \prod_{i = 1}^{n} \prod_{j = 1}^{m} \frac{1}{1 - x_i z_j}
$$
\end{proof}

\begin{corollary} The following identity holds for dual Grothendieck polynomials
\begin{align}\label{gl}
\sum_{\ell(\lambda) \le n} g_{\lambda}(\mathbf{z}) = \prod_{i = 1}^{m} \frac{1}{(1 - z_i)^{n}}
\end{align}
\end{corollary}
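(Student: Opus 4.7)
The plan is very short: this corollary is a direct specialization of the Cauchy-type identity \eqref{cauchy} just proved. Specifically, I would set $\mathbf{x} = 1^n$, i.e.\ $x_1 = \cdots = x_n = 1$, in the identity
\[
\sum_{\ell(\lambda) \le n} g_{\lambda}(\mathbf{x}; \mathbf{z}) = \prod_{i=1}^{n} \prod_{j=1}^{m} \frac{1}{1 - x_i z_j}.
\]

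On the left-hand side, by property (iii) of the preceding theorem, each $g_{\lambda}(1^n; \mathbf{z})$ collapses to the ordinary dual Grothendieck polynomial $g_{\lambda}(\mathbf{z})$, producing exactly the sum appearing in \eqref{gl}. On the right-hand side, each factor $\frac{1}{1 - x_i z_j}$ becomes $\frac{1}{1 - z_j}$, so the product over $i$ from $1$ to $n$ contributes an $n$-th power, yielding $\prod_{j=1}^{m} (1 - z_j)^{-n}$, which matches the claimed right-hand side after renaming the index $j \mapsto i$.

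There is no real obstacle here; the only sanity check is that the substitution $x_i = 1$ is legitimate, which holds because both sides of \eqref{cauchy} are polynomials in $\mathbf{x}$ (for each fixed bound on $\lambda$) and the specialization is well defined at $x_i = 1$. Hence the corollary follows in one line from \eqref{cauchy} and (iii).
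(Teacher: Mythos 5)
Your proposal is correct and coincides with the paper's own proof, which likewise plugs $\mathbf{x} = 1^{n}$ into \eqref{cauchy} and invokes property (iii). The extra remark about the legitimacy of the specialization is fine but not needed, since both sides are polynomials (respectively formal power series) in which $x_i = 1$ can be substituted directly.
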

\begin{proof}
Plug $\mathbf{x} = 1^{n}$ in \eqref{cauchy} and use (iii).
\end{proof}
\begin{remark}
Our proof of this identity as well as \eqref{cauchy} is essentially bijective, using the map $\Phi$ that preserves weights of $g_{\lambda}$.  
The identity \eqref{gl} has an algebraic proof using the family  of stable Grothendieck polynomials $\{G_{\lambda} \}$ dual to $\{g_{\lambda} \}$ and the Cauchy identity. It uses non-obvious properties of certain automorphisms of $G_{\lambda}$ discussed in \cite{buch}. See \cite{dy2, dy3}.
\end{remark}

\begin{remark}
Since $\{g_{\lambda} \}$ is a deformation of Schur polynomials, the formula \eqref{gl} can be compared to the following identity for Schur functions \cite{macdonald}
$$
\sum_{\ell(\lambda) \le n} s_{\lambda}(z_1, \ldots, z_n) = \prod_{1 \le i \le n}\frac{1}{1 - z_i} \prod_{1 \le i < j \le n} \frac{1}{1 - z_i z_j}.
$$
\end{remark}

\begin{remark}
The polynomials $g_{\lambda}(\mathbf{x} ; \mathbf{z})$ are in fact a translated version of the {\it refined} dual stable Grothendieck polynomials $\tilde{g}_{\lambda}(\mathbf{t}; \mathbf{z})$ introduced in \cite{ggl}. This can be seen via the formula \eqref{gdc}; namely, we have $g_{\lambda}(\mathbf{x}; \mathbf{z}) = \mathbf{x}^{\lambda} \tilde{g}_{\lambda}(\mathbf{x}^{-1}; \mathbf{z})$. 
This implies that $g_{\lambda}(\mathbf{x}; \mathbf{z})$ is {\it symmetric} in the $\mathbf{z}$ variables.
\end{remark}

\begin{problem}
How can the bijection $\Phi$ be extended to prove the {\it Cauchy identity} for symmetric Grothendieck polynomials? Namely, the identity 
$$
\sum_{\lambda} G_{\lambda}(\mathbf{x}) g_{\lambda}(\mathbf{z}) = \prod_{i,j} \frac{1}{1 - x_i z_j}.
$$
Note that our approach works for $\mathbf{x} = 1^n$ for which we obtained the identity \eqref{gl}. More generally, the problem is about a bijective proof of the {\it skew} Cauchy identity proved in \cite{dy2}
$$
\sum_{\lambda} G_{\lambda/\!\!/\mu}(\mathbf{x}) g_{\lambda/\nu}(\mathbf{z}) = \prod_{i,j} \frac{1}{1 - x_i z_j}  \sum_{\kappa} G_{\nu/\!\!/\kappa}(\mathbf{x}) g_{\mu/\kappa}(\mathbf{z}),
$$
which would be a generalization of skew RSK for Schur functions given in \cite{ss}.
\end{problem}

\section{Two statistics on plane partitions}
\subsection{Up-hooks}
For a plane partition $\pi$ 
and a cell $(i,j)$, 
define the {\it up-hook} 
$$
\mathrm{uh}(i,j) := \pi_{i j} + i - 1.
$$
When $\pi$ presented as a pile of cubes, up-hooks look as follows (here $\mathrm{uh}(i,j) = 5 + 4 - 1$):
\begin{center}
\begin{tikzpicture}[scale = 0.25]
\planepartition{{1},{1},{1},{5}}
\end{tikzpicture}
\end{center}
Now, define the {\it up-hook volume} of $\pi$ as:
$$
|\pi|_{{uh}} := \sum_{(i,j)\, \in\, \mathrm{Des}(\pi)} \mathrm{uh}(i,j).
$$

\begin{example}
Consider the plane partition 
\begin{center}
$\pi = $ \begin{ytableau}
 4 & {\bf 4} & {2} \\
 {\bf 4} & 2 & {\bf 2} \\  
 {\bf 2} & {\bf 2} 
\end{ytableau}
\end{center}
Here the bold entries correspond to $\mathrm{Des}(\pi)$ cells. We have e.g. $\mathrm{uh}(2,1) = 4 + 2 - 1 = 5$ and 
$$ 
|\pi|_{uh} = (4 + 1 - 1) + (4 + 2 - 1) + (2 + 2 - 1) + (2 + 3 - 1) + (2 + 3 - 1) = 20.
$$
Note that $|\pi| = 22 \ne |\pi|_{uh}$. 
\end{example}

\begin{theorem} The following identity holds
\begin{align}\label{teq}
\sum_{\pi\, \in\, \mathrm{PP}(\infty, n, m)} t^{\mathrm{des}(\pi)} q^{|\pi|_{uh}}= \prod_{i = 1}^{m} \prod_{j = 1}^{n} \frac{1}{1 - t\, q^{i + j - 1}}.
\end{align}
\end{theorem}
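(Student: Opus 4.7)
The plan is to reduce this to Theorem~\ref{mult} via a direct substitution of the formal variables. The key observation is that for a descent cell $(i,j) \in \mathrm{Des}(\pi)$ with $\pi_{ij} = \ell$, the up-hook value is $\mathrm{uh}(i,j) = \ell + i - 1$, so the exponent of $q$ in each factor should depend on both $i$ and the value $\pi_{ij}$, while the exponent of $t$ should simply count descent cells.

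First I would set $x_i := t\,q^{i-1}$ and $z_\ell := q^{\ell}$ in the identity of Theorem~\ref{mult}. Then each factor in the product becomes
\begin{align*}
\prod_{(i,j)\, \in\, \mathrm{Des}(\pi)} x_i z_{\pi_{ij}} = \prod_{(i,j)\, \in\, \mathrm{Des}(\pi)} t \, q^{i - 1 + \pi_{ij}} = t^{\mathrm{des}(\pi)}\, q^{|\pi|_{uh}},
\end{align*}
by the definitions of $\mathrm{des}(\pi)$ and the up-hook volume. On the right-hand side, the corresponding factor is $1/(1 - x_i z_\ell) = 1/(1 - t\, q^{i+\ell-1})$. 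Relabeling the index $\ell$ to $i$ and $i$ to $j$ (the two ranges are $[1,m]$ and $[1,n]$), we recover the claimed product side of \eqref{teq}.

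Alternatively, one can give a self-contained proof by re-invoking the bijection $\Phi$ of Theorem~\ref{bij}: if $\Phi(\pi) = D = (d_{i\ell})$, then $\mathrm{des}(\pi) = \sum_{i,\ell} d_{i\ell}$ (since each descent cell contributes to exactly one $D_{i\ell}$) and $|\pi|_{uh} = \sum_{i,\ell} d_{i\ell}(i+\ell-1)$. Summing the weight $t^{\sum d_{i\ell}} q^{\sum d_{i\ell}(i+\ell-1)}$ over all $n \times m$ $\mathbb{N}$-matrices factors as
\begin{align*}
\prod_{i=1}^{n}\prod_{\ell=1}^{m} \sum_{d \ge 0} \bigl(t\,q^{i+\ell-1}\bigr)^{d} = \prod_{i=1}^{n}\prod_{\ell=1}^{m} \frac{1}{1 - t\,q^{i+\ell-1}},
\end{align*}
which is the desired right-hand side.

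There is no real obstacle here; the theorem is an immediate corollary of the bijection $\Phi$ once one notices the crucial identity $\mathrm{uh}(i,j) = \pi_{ij} + i - 1$, which is exactly the additive split needed so that the descent weight factors as $x_i z_{\pi_{ij}}$ with $x_i$ depending only on the row and $z_\ell$ only on the value. In particular, the up-hook statistic is designed precisely so that the bijection of Theorem~\ref{bij} transports it to an additive weight on $\mathbb{N}$-matrix entries.
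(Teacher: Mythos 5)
Your proposal is correct and matches the paper's proof: the paper also obtains the identity by specializing $x_i = t\,q^{i-1}$, $z_j = q^{j}$ in Theorem~\ref{mult}, so that each descent cell contributes $t\,q^{\mathrm{uh}(i,j)}$. Your second, self-contained argument via $\Phi$ is just this same proof with the bijection of Theorem~\ref{mult} unwound, so nothing essentially new is added.
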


\begin{proof}
The identity follows from the multivariate formula in Theorem \ref{mult} by setting $x_i = t q^{i-1}$ and $z_{j} = q^{j}$ which gives $$\prod_{(i,j) \in \mathrm{Des}(\pi)} x_i z_{\pi_{i j}} = t^{\mathrm{des(\pi)}} q^{|\pi|_{uh}}$$
and hence the needed identity.
\end{proof}
\begin{remark}
Since the proof of Theorem~\ref{mult} is bijective, we have a bijective proof of \eqref{teq}.
\end{remark}

Letting $m,n \to \infty$ we obtain  
the following {\it equidistribution} of volume and up-hook-volume statistics.
\begin{corollary}[Equidistribution of volume and up-hook-volume]
We have:
$$
\sum_{\pi} q^{|\pi|_{uh}} = \prod_{k = 1}^{\infty} \frac{1}{(1 - q^k)^k} = \sum_{\pi} q^{|\pi|}. 
$$
Moreover, the pair statistics $(\mathrm{des}(\pi), |\pi|_{uh})$ and $(\mathrm{tr}(\pi), |\pi|)$ are also equidistributed:
$$
\sum_{\pi} t^{\mathrm{des}(\pi)} q^{|\pi|_{uh}} = \prod_{k= 1}^{\infty} \frac{1}{(1 - t q^k)^k} = \sum_{\pi} t^{\mathrm{tr}(\pi)} q^{|\pi|},
$$ 
where $\mathrm{tr}(\pi) = \sum_{i} \pi_{i i}$ is the {\it trace} of a plane partition.
\end{corollary}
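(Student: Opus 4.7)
The plan is to derive both equalities directly from the theorem just proved, using the identity
$$
\sum_{\pi\, \in\, \mathrm{PP}(\infty, n, m)} t^{\mathrm{des}(\pi)} q^{|\pi|_{uh}} = \prod_{i = 1}^{m} \prod_{j = 1}^{n} \frac{1}{1 - t\, q^{i + j - 1}},
$$
together with the classical trace generating function. I will handle the refined bivariate identity first and then specialize $t = 1$ to obtain the univariate statement.

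First, I take the limit $m, n \to \infty$ in the above identity. For each fixed plane partition $\pi$, its contribution $t^{\mathrm{des}(\pi)} q^{|\pi|_{uh}}$ appears once $m, n$ exceed the dimensions of $\pi$, so coefficient-wise convergence of the sum is immediate. The right-hand side converges as a formal power series since each monomial $t^a q^b$ gets contributions from only finitely many factors. This gives
$$
\sum_{\pi} t^{\mathrm{des}(\pi)} q^{|\pi|_{uh}} = \prod_{i, j \ge 1} \frac{1}{1 - t\, q^{i + j - 1}}.
$$

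Second, I collapse the double product. Grouping factors by the value $k = i + j - 1$, there are exactly $k$ pairs $(i,j) \in \mathbb{N}_{\ge 1}^2$ with $i + j - 1 = k$, so
$$
\prod_{i, j \ge 1} \frac{1}{1 - t\, q^{i + j - 1}} = \prod_{k = 1}^{\infty} \frac{1}{(1 - t q^k)^k}.
$$
This establishes the left equality of the refined statement. Setting $t = 1$ then yields the left equality of the unrefined statement, recovering MacMahon's classical generating function.

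For the right equality I would invoke Stanley's classical trace formula for plane partitions, namely
$$
\sum_{\pi} t^{\mathrm{tr}(\pi)} q^{|\pi|} = \prod_{k = 1}^{\infty} \frac{1}{(1 - t q^k)^k},
$$
which refines the MacMahon product by tracking the main diagonal sum $\mathrm{tr}(\pi) = \sum_i \pi_{ii}$. Combining the two evaluations of the same infinite product finishes the proof. There is no serious obstacle here: the only substantive step beyond the previous theorem is the product reindexing, and the matching right-hand side is a well-known classical identity.
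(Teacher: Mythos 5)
Your argument is correct and matches the paper's (implicit) proof: the corollary is obtained exactly by letting $m,n \to \infty$ in the identity \eqref{teq}, regrouping the factors by $k = i+j-1$ (of which there are $k$ pairs), and comparing with the classical MacMahon volume formula and Stanley's trace generating function $\sum_{\pi} t^{\mathrm{tr}(\pi)} q^{|\pi|} = \prod_{k\ge 1}(1-tq^k)^{-k}$. Your added remarks on coefficient-wise convergence of the limit are a harmless elaboration of what the paper leaves tacit.
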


Similarly, for $m \to \infty$ or $n \to \infty$ we obtain the following identities.
\begin{corollary} We have
\begin{align*}
\sum_{\pi\, \in\, \mathrm{PP}(\infty, \infty, m)} q^{|\pi|_{uh}} &= 
\prod_{j = 1}^{\infty} {(1 - q^{j})^{-\min(j,m)}}\\
\sum_{\pi\, \in\, \mathrm{PP}(\infty, n, \infty)} q^{|\pi|_{uh}} &= \prod_{i = 1}^{\infty} 
{(1 - q^{i})^{-\min(i,n)}}
\end{align*}
\end{corollary}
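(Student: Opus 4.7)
The plan is to obtain both identities as direct specializations of the previous theorem (equation \eqref{teq}) by setting $t = 1$ and then passing to the appropriate limit in the product side, where the main (and only) work is counting the multiplicity with which each exponent $q^k$ appears.

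Setting $t = 1$ in \eqref{teq} gives, for any finite $n, m$,
\begin{align*}
\sum_{\pi\, \in\, \mathrm{PP}(\infty, n, m)} q^{|\pi|_{uh}} = \prod_{i = 1}^{m} \prod_{j = 1}^{n} \frac{1}{1 - q^{i + j - 1}}.
\end{align*}
For the first identity I would let $n \to \infty$ (the left-hand side is a well-defined formal power series in $q$ since the coefficient of $q^N$ stabilizes once $n$ exceeds $N$). Then I would regroup the right-hand side by the value of the exponent $k := i + j - 1$: for each $k \ge 1$ the pairs $(i, j)$ with $1 \le i \le m$, $j \ge 1$, and $i + j - 1 = k$ correspond to $i \in \{1, \ldots, \min(k, m)\}$, so the exponent $k$ appears with multiplicity exactly $\min(k, m)$. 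This gives
\begin{align*}
\prod_{i = 1}^{m} \prod_{j = 1}^{\infty} \frac{1}{1 - q^{i + j - 1}} = \prod_{k = 1}^{\infty} (1 - q^{k})^{-\min(k, m)},
\end{align*}
which is the claimed formula (up to renaming $k$ as $j$).

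For the second identity (with $m \to \infty$) the argument is identical by the symmetry $i \leftrightarrow j$ in the product $\prod_i \prod_j (1 - q^{i+j-1})^{-1}$: sending $m \to \infty$ with $n$ fixed and regrouping by $k = i + j - 1$ yields multiplicity $\min(k, n)$.

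There is essentially no obstacle here: the corollary is a bookkeeping consequence of \eqref{teq}. The only point to be a little careful about is that the limit $n \to \infty$ (or $m \to \infty$) makes sense on both sides as formal power series in $q$, which follows because both sides have coefficients that stabilize once $n$ (respectively $m$) exceeds the degree of the coefficient being considered.
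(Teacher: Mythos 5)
Your proposal is correct and matches the paper's approach: the corollary is obtained from \eqref{teq} by setting $t=1$, letting $n\to\infty$ (resp.\ $m\to\infty$), and regrouping the product by the exponent $k=i+j-1$, which appears with multiplicity $\min(k,m)$ (resp.\ $\min(k,n)$). Your added remark on coefficientwise stabilization justifying the formal limit is a fine (if routine) bit of care that the paper leaves implicit.
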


\begin{remark}
It would be interesting to see a direct combinatorial argument why the statistics $|\cdot|_{uh}$ and $|\cdot|$ are equidistributed (and when paired with des and trace). 
\end{remark}

\begin{remark}
The map $\pi_{i j} \mapsto \mathrm{uh}(i,j) = \pi_{i j} + i - 1$ for all $i,j$ defines a bijection between (weak) {\it reverse plane partitions} (RPP which may contain zero entries) and semistandard Young tableaux (SSYT) (see e.g. \cite[Ch.~7.22]{sta}). This bijection gives the famous formula  
\begin{align}\label{hilg}
\sum_{\mathrm{sh}(\pi) = \lambda} q^{|\pi|} =q^{|\lambda| - b(\lambda)} s_{\lambda}(1,q,q^2, \ldots) = q^{|\lambda|} \prod_{(i,j) \in \lambda} \frac{1}{1 - q^{\,h_{\lambda}(i,j)}},
\end{align}
where the sum runs over RPP $\pi$ of shape $\lambda$, $b(\lambda) = \sum (i-1)\lambda_i$ and $h_{\lambda}(i,j) = \lambda_i - i + \lambda'_j - j + 1$ is the {\it hook length} of $(i,j)$ in $\lambda$. A bijective proof of \eqref{hilg} is known as the {\it Hillman-Grassl correspondence} \cite{hg}, see also \cite[Ch.~7.21-22]{sta}. It would be interesting to see  if  there is any analogous formula when $|\cdot|$ is replaced with the $|\cdot|_{uh}$ statistic.
\end{remark}

\subsection{Corner volume}
For a plane partition $\pi$, define the \textbf{{\it corner volume}} statistic $|\pi|_c$ as 
$$ 
|\pi|_c := \sum_{(i,j)\, \in\, \mathrm{Des}(\pi)} \pi_{i,j}
$$
Equivalently, $|\pi|_{c} = \sum_{i \ge 1} i\, c_i$, where $c_i$ is the number of columns of $\pi$ containing $i$.

Our next result can be viewed as the corner volume counterpart of the classical MacMahon's formula.
\begin{theorem}\label{mtm}
Let $\rho = (k^n)$. The following identities hold:
\begin{align}
\sum_{\pi\, \in\, \mathrm{PP}(k,\, n, \, m)} q^{|\pi|_c} &= s_{\rho}(1^n, q, q^2, \ldots, q^{m})\label{q1} \\ 
\sum_{\pi\, \in\, \mathrm{PP}'(k,\, n, \, m)} q^{|\pi|_c} &= s_{\rho}(1^{n-1}, q, q^2, \ldots, q^{m}) \label{q2}\\
\sum_{\pi\, \in\, \mathrm{PP}(\infty, n, m)} q^{|\pi|_c} &= \prod_{i = 1}^{m} {(1 - q^i)^{-n}} \label{q3}
\end{align}
\end{theorem}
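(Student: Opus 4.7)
The identity (\ref{q3}) is immediate from Theorem~\ref{mult} by specializing $x_i = 1$ for all $i \in [n]$ and $z_\ell = q^\ell$ for all $\ell \in [m]$: then $\prod_{(i,j) \in \mathrm{Des}(\pi)} x_i z_{\pi_{ij}} = q^{|\pi|_c}$, and the product side collapses to $\prod_{\ell = 1}^m (1 - q^\ell)^{-n}$.

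For (\ref{q1}) my plan is to compose $\Phi$ with the classical RSK correspondence. Under $\Phi$, $\mathrm{PP}(k,n,m)$ is in bijection with $n \times m$ $\mathbb{N}$-matrices $D$ whose maximum-weight down-right path from $(1,1)$ to $(n,m)$ is at most $k$ (the path property cited after Example~\ref{ex3}, which via $\Phi$ is Greene's theorem for RSK). Running RSK on such $D$ yields pairs $(P, Q)$ of SSYT of a common shape $\lambda \subseteq (k^n)$ with $P \in \mathrm{SSYT}(\lambda, [m])$ and $Q \in \mathrm{SSYT}(\lambda, [n])$; and since the $\ell$-th column sum of $D$ equals the number of $\ell$'s in $P$, the weight $q^{|\pi|_c} = \prod_\ell q^{\ell \sum_i d_{i\ell}}$ translates to $q^{|P|}$ (sum of entries of $P$), giving
\[
\sum_{\pi \in \mathrm{PP}(k,n,m)} q^{|\pi|_c} \;=\; \sum_{\lambda \subseteq (k^n)} s_\lambda(1^n)\, s_\lambda(q, q^2, \ldots, q^m).
\]
To identify this with $s_{(k^n)}(1^n, q, q^2, \ldots, q^m)$ I would apply the Schur coproduct $s_{(k^n)}(1^n, y) = \sum_{\nu \subseteq (k^n)} s_\nu(1^n)\, s_{(k^n)/\nu}(y)$, then the standard rectangle-complement identity $s_{(k^n)/\nu}(y) = s_{\nu^c}(y)$ with $\nu^c := (k - \nu_n, \ldots, k - \nu_1)$ (by $180^\circ$-rotation of the skew tableau together with entry complementation), and finally, after reindexing $\nu \leftrightarrow \nu^c$, the dimension identity $s_{\nu^c}(1^n) = s_\nu(1^n)$ for $\nu \subseteq (k^n)$ (from the $\det^k$-twist of $GL_n$-irreducibles, equivalently a hook-content symmetry).

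For (\ref{q2}) the same strategy applies with the extra condition $\mathrm{sh}(\pi) = (k^n)$. By the weak monotonicity of row lengths of a plane partition, this is equivalent to $\pi$ having last row of length exactly $k$, i.e.\ $\sum_\ell d_{n \ell} = k$, which under RSK says $Q$ contains exactly $k$ entries equal to $n$. Combined with $\lambda_1 = k$, the $n$'s of $Q$ are then forced to occupy the bottom cell of each of the $k$ columns of $\lambda$, so that the remaining cells of $Q$ form the subshape $\mu := (\lambda_2, \ldots, \lambda_n) \subseteq (k^{n-1})$ filled by an arbitrary SSYT with entries in $[n-1]$. This gives
\[
\sum_{\pi \in \mathrm{PP}'(k,n,m)} q^{|\pi|_c} \;=\; \sum_{\mu \subseteq (k^{n-1})} s_\mu(1^{n-1})\, s_{(k, \mu)}(q, q^2, \ldots, q^m),
\]
which I match with $s_{(k^n)}(1^{n-1}, y)$ again via the coproduct: in $\sum_\nu s_{(k^n)/\nu}(1^{n-1})\, s_\nu(y)$ the factor $s_{(k^n)/\nu}(1^{n-1})$ vanishes unless $\nu_1 = k$ (otherwise some column of the skew has $n$ cells that cannot be filled strictly increasingly from $[n-1]$), so only $\nu = (k, \mu)$ with $\mu \subseteq (k^{n-1})$ contribute; and then $s_{(k^n)/(k, \mu)}(1^{n-1}) = s_{(k^{n-1})/\mu}(1^{n-1}) = s_{\mu^c}(1^{n-1}) = s_\mu(1^{n-1})$ by the same rectangle-complement and dimension identities.

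The main delicate step is the careful RSK translation of the geometric conditions on $\pi$ (first row length $\le k$; support equal to the rectangle $(k^n)$) into the corresponding combinatorial conditions on the pair $(P, Q)$ (shape bound $\lambda_1 \le k$; respectively $\lambda_1 = k$ together with exactly $k$ entries equal to $n$ in $Q$); once these translations are in place, the three Schur-function inputs (coproduct, rectangle complement, and $\det^k$-dimension symmetry) finish the job.
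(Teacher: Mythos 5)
Your proposal is correct, but it follows a genuinely different route from the paper's. The paper stays entirely inside the dual-Grothendieck framework: it notes that $\sum_{\mathrm{sh}(\pi)=\lambda} q^{|\pi|_c} = g_{\lambda}(q,q^2,\ldots,q^m)$, proves $g_{(k^n)}(\mathbf{z})=s_{(k^n)}(1^{n-1},\mathbf{z})$ by comparing Jacobi--Trudi determinants (Lemma~\ref{l1}, using the Jacobi--Trudi identity \eqref{gjt} for $g_{\lambda}$ from \cite{dy}), deduces $\sum_{\lambda\subset(k^n)}g_{\lambda}(\mathbf{z})=s_{(k^n)}(1^{n},\mathbf{z})$ by a one-line branching argument, and obtains \eqref{q3} from \eqref{gl}; the theorem is then just the principal specialization $z_i=q^i$ of these polynomial identities. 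You instead compose $\Phi$ with classical RSK: the box condition becomes $\lambda_1\le k$ via the maximal down-right path property of $\Phi$ (the property quoted after Example~\ref{ex3}) combined with Greene's theorem, the corner volume becomes $q^{|P|}$ because the column sums of $D$ are the content of $P$, and the resulting sums $\sum_{\lambda\subseteq(k^n)}s_{\lambda}(1^n)s_{\lambda}(q,\ldots,q^m)$ and $\sum_{\mu\subseteq(k^{n-1})}s_{\mu}(1^{n-1})s_{(k,\mu)}(q,\ldots,q^m)$ are folded into the rectangular Schur polynomials by the coproduct, the rectangle-complement identity $s_{(k^r)/\nu}=s_{\nu^c}$, and the symmetry $s_{\nu^c}(1^r)=s_{\nu}(1^r)$ --- all of which are standard and correctly applied; for \eqref{q3} your specialization of Theorem~\ref{mult} is exactly what the paper does implicitly via \eqref{gl}. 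In effect you trade the paper's external input (the Jacobi--Trudi formula for $g_{\lambda}$) for two other inputs (the path property of $\Phi$, which the paper only cites from \cite{dy4}, and Greene's theorem for RSK) plus routine Schur-function manipulations; your version is more classical in flavor and exhibits \eqref{q1}--\eqref{q2} as rectangle Cauchy-type identities, while the paper's argument is shorter given its $g_{\lambda}$ machinery and produces the stronger polynomial statements \eqref{gx} and \eqref{ggg}, which it reuses later (e.g.\ for the descent-enumeration bounds). One point to state more carefully in \eqref{q2}: $\mathrm{sh}(\pi)=(k^n)$ is equivalent to ``last row of length $k$'' only in conjunction with the constraint that the first row length is at most $k$ (i.e.\ $\lambda_1\le k$), and it is this pair of conditions that forces $\lambda_1=k$ (since the number of $n$'s in $Q$ is at most $\lambda_1$) and pins an $n$ to the bottom of each of the $k$ columns of $Q$; you use this, but your phrase ``this is equivalent to $\pi$ having last row of length exactly $k$'' should be qualified accordingly.
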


One can let $m \to \infty$ in all these formulas. 

For $q = 1$ in \eqref{q1} we recover MacMahon's boxed formula \eqref{mm}; from the second identity \eqref{q2} we get the formula for the number of plane partitions with the first row length $k$, the first column length $n$, and the first entry at most $m$: 
$$|\mathrm{PP}'(k,n,m)| = s_{\rho}(1^{m+n-1}) =|\mathrm{PP}(k,n,m-1)|.$$ 

To prove Theorem~\ref{mtm} we need some results on dual Grothendieck polynomials.

\begin{lemma}\label{l1}
Let $\rho = (k^{n})$. We have 
\begin{align}\label{ggg}
g_{\rho}(\mathbf{z}) = s_{\rho}(1^{n - 1}, \mathbf{z}).
\end{align}
\end{lemma}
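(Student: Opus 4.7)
My plan is to prove the identity $g_{(k^n)}(\mathbf{z}) = s_{(k^n)}(1^{n-1}, \mathbf{z})$ by matching the Schur expansions of both sides.

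For the left-hand side, I would invoke the Lam--Pylyavskyy Schur expansion of the dual Grothendieck polynomial,
\[
g_\lambda(\mathbf{z}) = \sum_\mu f^{\mathrm{el}}_{\lambda/\mu}\, s_\mu(\mathbf{z}),
\]
where $f^{\mathrm{el}}_{\lambda/\mu}$ counts \emph{elegant} skew tableaux of shape $\lambda/\mu$, i.e.\ SSYT whose row-$i$ entries lie in $[1,\,i-1]$. For $\lambda = (k^n)$ the elegance constraint forces row $1$ of $\lambda/\mu$ to be empty, so only $\mu$ with $\mu_1 = k$ contribute.

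For the right-hand side, the Schur coproduct gives
\[
s_{(k^n)}(1^{n-1}, \mathbf{z}) = \sum_{\ell(\nu)\le n-1} s_\nu(1^{n-1})\, s_{(k^n)/\nu}(\mathbf{z}).
\]
Applying the standard $180^\circ$-rotation duality for skew Schur polynomials inside a rectangle, $s_{(k^n)/\nu}(\mathbf{z}) = s_{\widetilde\nu}(\mathbf{z})$ with $\widetilde\nu_i := k - \nu_{n+1-i}$, and reindexing by $\mu = \widetilde\nu$ (so that $\mu_1 = k$), this becomes $\sum_\mu s_{\widetilde\mu}(1^{n-1})\, s_\mu(\mathbf{z})$.

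Comparing Schur coefficients, the lemma reduces to the purely combinatorial identity
\[
f^{\mathrm{el}}_{(k^n)/\mu} = s_{\widetilde\mu}(1^{n-1})
\]
for each $\mu \subseteq (k^n)$ with $\mu_1 = k$. I would prove this via the explicit bijection $E \leftrightarrow U$ given by $U_{j,c} := n - E_{n+1-j,\,k+1-c}$. Under $180^\circ$-rotation of cells inside the ambient rectangle, $(k^n)/\mu$ is exchanged with $\widetilde\mu$, and the value-complementation $v \mapsto n-v$ turns the elegance bound $E_{i,\cdot} \in [1,\,i-1]$ into the lower bound $U_{j,\cdot} \ge j$, automatically placing entries in $[1,\,n-1]$. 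The SSYT monotonicity conditions for $U$ (rows weakly increasing, columns strictly increasing) translate directly into the corresponding conditions on $E$.

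The principal obstacle will be the appeal to the non-trivial Lam--Pylyavskyy Schur expansion. A more self-contained route would be to construct a direct weight-preserving bijection between plane partitions of shape exactly $(k^n)$ with entries in $[1,m]$ and SSYT of shape $(k^n)$ with entries in $[1, n+m-1]$; the natural geometric bijection $T_{i,j} = \pi_{n+1-i,\, k+1-j} + (i-1)$ matches the underlying sets but fails to preserve the relevant weights cell-by-cell, so a more subtle combinatorial construction would be needed.
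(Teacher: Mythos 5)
Your argument is correct, but it takes a genuinely different route from the paper. The paper's proof is a two-line determinant comparison: it quotes the Jacobi--Trudi formula $g_{\lambda}(\mathbf{z}) = \det\left[ e_{\lambda'_i - i + j}(1^{\lambda'_i - 1}, \mathbf{z})\right]$ from \cite{dy} and observes that for the rectangle $\rho=(k^n)$ (where $\lambda'_i=n$ for all $i\le k$) this determinant is literally the dual Jacobi--Trudi determinant for $s_{\rho}(1^{n-1},\mathbf{z})$. You instead rely on the Lam--Pylyavskyy Schur expansion $g_{\lambda}=\sum_{\mu\subseteq\lambda} f^{\mathrm{el}}_{\lambda/\mu}\, s_{\mu}$, expand $s_{(k^n)}(1^{n-1},\mathbf{z})$ by the branching rule, use $180^{\circ}$-rotation invariance of skew Schur functions to rewrite $s_{(k^n)/\nu}=s_{\widetilde\nu}$, and then match Schur coefficients via the explicit complementation bijection $U_{j,c}=n-E_{n+1-j,\,k+1-c}$; I checked that this bijection does exchange the elegance bound $E_{i,\cdot}\le i-1$ with the (automatic) SSYT bound $U_{j,\cdot}\ge j$ together with $U\le n-1$, and that it preserves row-weak/column-strict monotonicity, so the key identity $f^{\mathrm{el}}_{(k^n)/\mu}=s_{\widetilde\mu}(1^{n-1})$ for $\mu_1=k$ is correct (and the elegance constraint indeed forces $\mu_1=k$). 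Both proofs lean on one nontrivial imported fact about dual Grothendieck polynomials --- the paper on the Jacobi--Trudi identity of \cite{dy}, you on the elegant-tableau expansion of \cite{lp} --- so neither is self-contained; the paper's route is shorter, while yours is more combinatorially explicit and isolates exactly why rectangular shapes are special (the rectangle complement turns elegant fillings into ordinary SSYT in $n-1$ letters). One cosmetic point: since the paper defines $g_{\lambda}$ via columns of weakly decreasing plane partitions rather than reverse plane partitions, you should note that this agrees with the Lam--Pylyavskyy $g_{\lambda}$ (by the value-reversal bijection and symmetry), but this identification is already implicit in the paper.
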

\begin{proof}
We show that the Jacobi-Trudi formulas for both polynomials coincide for the shape $\rho$. Note first that by the classical Jacobi-Trudi formula we have
\begin{align}\label{sr}
s_{\rho}(1^{n - 1}, \mathbf{z}) = \det\left[ e_{n - i + j}(1^{n - 1}, \mathbf{z})\right]_{1 \le i, j \le k}.
\end{align}
For the polynomials $g$, we have the following Jacobi-Trudi identity 
\cite{dy}
\begin{align}\label{gjt}
g_{\lambda}(\mathbf{z}) = \det\left[ e_{\lambda'_i - i + j}(1^{\lambda'_i - 1}, \mathbf{z})  \right]_{1 \le i,j \le \lambda_1}
\end{align}
which for $\lambda = \rho$ gives \eqref{sr}.
\end{proof}

\begin{remark}
This lemma together with the bijection $\Phi$ is important for various probabilistic applications particularly related to certain {\it corner} distributions of {\it lozenge tilings}, given in our recent works \cite{dy4, dy5}. 
\end{remark}

\begin{remark}
Jacobi-Trudi-type formulas for dual Grothendieck polynomials were originally presented in  \cite{sz}. A (combinatorial) proof  of \eqref{gjt} 
is given in \cite{dy}. See also \cite{kim, ay} with more general identities for skew shapes. 
\end{remark}

\begin{remark}
We would also like to point out that dual Grothendieck polynomials of rectangular shapes 
appear in solutions of discrete {Toda} equations, see \cite{in}. 
\end{remark}

\begin{corollary}
We have the following formula
\begin{align}
\sum_{\lambda \subset (k^{n})} g_{\lambda}(\mathbf{z}) &= s_{(k^{n})}(1^{n}, \mathbf{z})\label{gx} 
\end{align}
\end{corollary}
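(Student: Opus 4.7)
The plan is to first reduce the identity via Lemma~\ref{l1} to a branching-type claim for $g_{(k^n)}$, and then verify that claim combinatorially by decomposing plane partitions.

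Applying Lemma~\ref{l1} to the shape $\rho=(k^n)$ with the $(m{+}1)$-tuple $(1,z_1,\ldots,z_m)$ in place of $\mathbf{z}$ gives
\[
g_{(k^n)}(1,z_1,\ldots,z_m) \;=\; s_{(k^n)}(1^{n-1},1,z_1,\ldots,z_m) \;=\; s_{(k^n)}(1^n,\mathbf{z}),
\]
so it suffices to establish
\[
\sum_{\lambda\subset(k^n)} g_\lambda(\mathbf{z}) \;=\; g_{(k^n)}(1,\mathbf{z}).
\]

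For this I would expand $g_{(k^n)}(1,\mathbf{z}) = g_{(k^n)}(\mathbf{z},1)$ (by symmetry of $g_\lambda$ in $\mathbf{z}$) via~\eqref{gdc}: it is $\sum_\pi \prod_{j=1}^{m} z_j^{c_j(\pi)}$ over plane partitions $\pi$ of shape exactly $(k^n)$ with entries in $\{1,\ldots,m{+}1\}$. Every such $\pi$ decomposes uniquely as a pair $(\mu,\pi')$, where $\mu\subset(k^n)$ is the Young subdiagram consisting of the cells of $\pi$ that carry value $m{+}1$ (the plane-partition monotonicity forces this set to be a partition in the upper-left corner), and $\pi'$ is a plane partition on the skew shape $(k^n)/\mu$ with positive entries $\le m$; moreover $c_j(\pi)=c_j(\pi')$ for all $j\le m$. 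The standard $180^{\circ}$-rotate-and-complement map
\[
\tilde\pi_{i,j}\;:=\;m+1-\pi'_{n+1-i,\,k+1-j}
\]
then identifies such $\pi'$ bijectively with plane partitions $\tilde\pi$ of the rectangular-complement straight shape $\mu^{\vee}:=(k-\mu_n,\ldots,k-\mu_1)\subset(k^n)$, still with positive entries $\le m$, and sends $c_\ell(\pi')$ to $c_{m+1-\ell}(\tilde\pi)$. The reversal of the $z$-indices is absorbed by the symmetry of $g_{\mu^\vee}(\mathbf{z})$ in $\mathbf{z}$, so the inner sum over $\pi'$ equals $g_{\mu^\vee}(\mathbf{z})$; summing over $\mu$ and applying the involution $\mu\leftrightarrow\mu^\vee$ on partitions contained in $(k^n)$ produces $\sum_{\lambda\subset(k^n)} g_\lambda(\mathbf{z})$.

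The main obstacle is purely bookkeeping: verifying that stripping the $(m{+}1)$-cells really does leave a plane partition on the skew shape $(k^n)/\mu$, and that the rotate-and-complement map preserves all plane-partition inequalities while cleanly inverting both column indices and entry values, so that the symmetry of $g_{\mu^\vee}$ delivers the identification of the inner sum with $g_{\mu^\vee}(\mathbf{z})$.
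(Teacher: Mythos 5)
Your proposal is correct, and its skeleton is the same as the paper's: reduce via Lemma~\ref{l1} (applied to the $(m{+}1)$-tuple $(1,\mathbf{z})$, exactly as the paper does) to the branching identity $g_{(k^n)}(1,\mathbf{z})=\sum_{\lambda\subset(k^n)}g_\lambda(\mathbf{z})$. The only divergence is how that branching identity is justified. The paper simply asserts it ``by the combinatorial definition of $g_\lambda$,'' the implicit direct argument being to keep the extra variable $1$ in the slot of the \emph{smallest} entry value: in a plane partition of shape exactly $(k^n)$ with entries in $[1,m{+}1]$, the cells with entry $\ge 2$ form a partition $\lambda\subset(k^n)$, the entries equal to $1$ fill $(k^n)\setminus\lambda$ and carry weight $1$, and decrementing the remaining entries gives a plane partition of shape $\lambda$ with entries in $[1,m]$ and matching column statistics --- no symmetry of $g_\lambda$ is needed. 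You instead move the variable $1$ to the slot of the \emph{largest} entry value $m{+}1$ (using symmetry of $g_{(k^n)}$), strip the $(m{+}1)$-cells, and identify the leftover skew filling with a straight-shape plane partition of the complementary shape $\mu^\vee$ via the $180^\circ$ rotate-and-complement map, using symmetry of $g_{\mu^\vee}$ once more to undo the reversal of the $z$-indices. All of your bookkeeping checks out (the $(m{+}1)$-cells do form an upper-left Young diagram, the rotation preserves the inequalities and sends $c_\ell$ to $c_{m+1-\ell}$, and $\mu\mapsto\mu^\vee$ is an involution on partitions in $(k^n)$), so the argument is valid; it is just somewhat heavier than the intended one, since it imports the symmetry of $g_\lambda$ twice where the bottom-value stripping needs no such input.
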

\begin{proof}
Let $\rho = (k^{n})$. By the combinatorial definition of $g_{\lambda}$, we have the following branching formula
$$
g_{\rho}(1,\mathbf{z}) = \sum_{\lambda \subset \rho} g_{\lambda}(\mathbf{z}).
$$
On the other hand, $g_{\rho}(1,\mathbf{z}) = s_{\rho}(1^{n}, \mathbf{z})$ by Lemma~\ref{l1}. 
\end{proof}

\begin{proof}[Proof of Theorem~\ref{mtm}]
Note that for  the principal specialization $z_i = q^{i}$ we have
$$
g_{\lambda}(q, q^2, \ldots) = \sum_{\mathrm{sh}(\pi) = \lambda} \prod_{(i,j) \in \mathrm{Des}(\pi)} q^{\pi_{i j}} =  \sum_{\mathrm{sh}(\pi) = \lambda} q^{|\pi|_c}.
$$
The identities \eqref{q1}, \eqref{q2} now follow from \eqref{gx} and \eqref{ggg}, respectively. The identity \eqref{q3} follows from \eqref{gl}.
\end{proof}

\begin{remark}
By taking appropriate specializations, we can easily generalize our formulas for {\it restricted} partitions with parts from any set of nonnegative integers. 
\end{remark}

\begin{remark}
It is not difficult to show that both of the presented statistics $|\cdot|_{uh}$ and $|\cdot|_c$ are {\it superadditive} functions on plane partitions, i.e.
$$
|\pi + \pi'|_{\alpha} \ge |\pi|_{\alpha} + |\pi'|_{\alpha}, 
$$
where $\alpha$ is $uh$ or $c$. Moreover, $|\cdot|_c$ is an {\it antinorm}, i.e. besides superadditivity we also have
\begin{itemize}
\item[(i)] $|\pi|_c \ge 0$ 
and $|\pi|_c=0$ implies $\pi=0$
\item[(ii)] $|k\, \pi|_c=k|\pi|_c$ for $k \in \mathbb{N}$.
\end{itemize}
However, the item (ii) fails for $|\cdot|_{uh}$ and instead we have
\begin{align*}
k |\pi|_c \le |k\, \pi|_{uh} = |\pi|_{uh} + (k - 1) |\pi|_c \le k |\pi|_{uh}.
\end{align*}
Note also that we always have $|\pi|_{uh}, |\pi| \ge |\pi|_{c}$ and $|\cdot|$ is an additive norm.
\end{remark}
 
\begin{remark}
Enumeration of plane partitions for the usual volume 
has many product formulas for various {\it symmetry classes}, see \cite{sta2}. It would be interesting to see if there are any analogous generating functions 
for the weights $|\cdot |_{uh}$ or $|\cdot|_c$ over some symmetric plane partitions.
\end{remark}

\section{Strict tableaux and longest increasing subsequences of words}
Say that a plane partition $\pi$ is a {\it strict tableau} if it is filled with entries $[n] := \{1, \ldots, n \}$ such that each $i \in [n]$ appears in exactly one column of $\pi$. We then say that $[n]$ is a {\it filling} of $\pi$.
Let $\mathrm{ST}(\lambda, n)$ be the set
of strict tableaux of shape $\lambda$ with the filling $[n]$ and $f_{\lambda}(n) = |\mathrm{ST}(\lambda, n)|$. 
We clearly have $f_{\lambda}(n) = 0$ unless $\lambda_1 \le n \le |\lambda|$ and $f_{\lambda}(|\lambda|) = f^{\lambda}$ is the number of standard Young tableaux (SYT) of shape $\lambda$.

Denote also by $W_{n,m}$ the set of words of length $n$ in the alphabet $[m]$.


\begin{theorem}
The following identity holds:
\begin{align}\label{idd}
\sum_{\lambda \subset (n^m)} f_{\lambda}(n) = m^n.
\end{align}
Furthermore, there is a bijection between the sets $W_{n,m}$ and $\mathrm{ST}(\lambda, n)$ for $\lambda \subset (n^m)$.
\end{theorem}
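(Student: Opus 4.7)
My plan is to use the bijection $\Phi$ of Theorem \ref{bij}, restricted to a suitable subset of matrices. Specifically, I expect the strict tableaux in $\bigsqcup_{\lambda \subset (n^m)} \mathrm{ST}(\lambda,n)$ to correspond, under $\Phi$, exactly to the $m \times n$ $01$-matrices containing precisely one $1$ per column. Since such matrices are tautologically identified with $W_{n,m}$ by recording the row of the $1$ in each column (column $\ell$ contributes the letter $w_\ell \in [m]$), this would prove both the bijection and the count $m^n$ at once.

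For the forward direction, I would argue that given any strict tableau $\pi \in \mathrm{ST}(\lambda,n)$ with $\lambda \subset (n^m)$, each value $\ell \in [n]$ occupies exactly one column of $\pi$, and the plane-partition condition forces its occurrences in that column to form a vertically contiguous run. Consequently the descent level set $D_{i\ell}$ of \eqref{ldes} is empty except at the row at the bottom of this run, where $d_{i\ell} = 1$. Hence the $\ell$th column of $\Phi(\pi)$ is a standard basis vector in $\mathbb{N}^m$, exactly as required, and the hypothesis $\ell(\lambda) \le m$ is precisely what ensures $\Phi(\pi)$ fits inside $m$ rows.

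Conversely, given a one-per-column $01$-matrix $D_w$ with its $1$'s in positions $(w_\ell, \ell)$, I would unwind the explicit construction of $\Phi^{-1}$ from the proof of Theorem \ref{bij}: processing column $\ell$ adds the entry $\ell$ exactly once into a single column of the growing plane partition $\pi$. So each $\ell \in [n]$ ultimately lies in exactly one column of $\pi = \Phi^{-1}(D_w)$, and $\pi$ is a strict tableau with filling $[n]$. Its shape satisfies $\ell(\lambda) \le m$ by Theorem \ref{bij} and $\lambda_1 \le n$ because the total number of columns of $\pi$ is bounded by the number of distinct values $n$.

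I do not foresee a serious obstacle: the only delicate step is the forward check that a contiguous run of $\ell$'s in one column contributes \emph{exactly} one descent rather than several, but this is immediate because within the run $\pi_{ij} = \pi_{i+1,j} = \ell$ violates the strict inequality in \eqref{ldes}, leaving only the bottom cell of the run to contribute. With both directions in place, the map $w \longmapsto \Phi^{-1}(D_w)$ is the required bijection and \eqref{idd} follows by comparing cardinalities.
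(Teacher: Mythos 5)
Your proposal is correct and follows essentially the same route as the paper: apply $\Phi$ to identify strict tableaux with filling $[n]$ and shape inside $(n^m)$ with $m\times n$ matrices having a single $1$ per column, which are tautologically words in $W_{n,m}$. You merely spell out the details the paper leaves as "easy to see" (the contiguous run of each value contributing exactly one descent at its bottom cell, and the unwinding of $\Phi^{-1}$ on one-per-column matrices), so no gap remains.
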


\begin{proof}
When we apply the bijection $\Phi$ on a strict tableau $\pi$, the resulting matrix $D$ contains a single $1$ in every of its $n$ columns, and has at most $m$ rows. Such $D$ clearly corresponds to a unique word in $W_{n,m}$ by writing down row indices of $1$'s in $D$. Conversely, it is easy to see that any word in $W_{n,m}$ converted into a matrix $D$ and applying the inverse map $\Phi^{-1}$ gives a unique element of $\mathrm{ST}(\lambda, n)$ for some $\lambda \subset (n^m)$.
\end{proof}

\begin{remark}
As it was noticed by one of the referees,  a map from strict tableaux to words  has the following simple description: For each $i \in [n]$ write down the largest row index in which $i$ appears.
\end{remark}

We now show an algebraic motivation of strict tableaux and a way to derive the formula \eqref{idd}. The numbers $f_{\lambda}(n)$ can be extracted from the so-called {\it Plancherel specialization} of the ring of symmetric functions $\Lambda$ applied to the basis $\{g_{\lambda}\}$. Namely, let $\theta : \Lambda \to \mathbb{R}[[t]]$ be the specialization (algebra homomorphism) given by $$\theta : p_1 \mapsto t \text{ and } \theta : p_k \mapsto 0 \text{ for } k \ge 2,$$ where $p_k = \sum_{i} x_i^k$ are the power sum symmetric functions which are generators of $\Lambda$.

\begin{proposition}
The following expansion holds
\begin{align}\label{gexp}
\theta(g_{\lambda}) = \sum_{n = \lambda_1}^{|\lambda|} f_{\lambda}(n) \frac{t^n}{n!}
\end{align}
\end{proposition}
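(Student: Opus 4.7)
The plan is to compute $\theta(g_\lambda)$ directly from the combinatorial formula \eqref{gdc} by invoking the classical description of the exponential (Plancherel) specialization. Specifically, for any $f \in \Lambda$ I will use the identity
\[
\theta(f) \;=\; \sum_{n \ge 0} \frac{t^n}{n!}\, [z_1 z_2 \cdots z_n]\,f,
\]
where $[z_1 \cdots z_n]\,f$ denotes the coefficient of the squarefree monomial $z_1 z_2 \cdots z_n$ in the power-series expansion of $f$ in $z_1, z_2, \ldots$. This is standard (cf.\ \cite[\S7.8]{sta}); equivalently, $\theta(e_n) = \theta(m_{(1^n)}) = t^n/n!$ while $\theta(m_\mu) = 0$ whenever $\mu$ has some part $\ge 2$, which can be verified in one line by evaluating $\theta(f) = \lim_{N \to \infty} f(t/N, \ldots, t/N)$ (with $N$ copies of $t/N$) on monomial symmetric functions.

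Applied to $g_\lambda(\mathbf{z}) = \sum_{\mathrm{sh}(\pi) = \lambda} \prod_j z_j^{c_j(\pi)}$, the computation of $\theta(g_\lambda)$ reduces to extracting $[z_1 \cdots z_n]\,g_\lambda$ for each $n$. That coefficient counts plane partitions $\pi$ of shape $\lambda$ satisfying $c_j(\pi) = 1$ for $j \in [n]$ and $c_j(\pi) = 0$ for $j > n$: the vanishing for $j > n$ forces the entries of $\pi$ to lie in $[n]$, while $c_j(\pi) = 1$ for every $j \in [n]$ says that each such $j$ occupies exactly one column of $\pi$. These are precisely the defining conditions of a strict tableau of shape $\lambda$ with filling $[n]$, so $[z_1 \cdots z_n]\,g_\lambda = f_\lambda(n)$, proving \eqref{gexp}. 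The summation range $\lambda_1 \le n \le |\lambda|$ is automatic from $f_\lambda(n) = 0$ outside it.

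The only nontrivial input is the exponential specialization identity; once that is in hand, the remainder is a direct match between a squarefree-coefficient extraction in $g_\lambda$ and the definition of a strict tableau, so I do not anticipate any real obstacle beyond the elementary monomial computation indicated above.
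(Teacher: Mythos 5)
Your proposal is correct and follows essentially the same route as the paper: it invokes the standard fact $\theta(g)=\sum_n [z_1\cdots z_n]g\,\frac{t^n}{n!}$ for the exponential (Plancherel) specialization and then identifies the squarefree coefficient $[z_1\cdots z_n]g_\lambda$ with $f_\lambda(n)$ via the combinatorial description of $g_\lambda$ in terms of the column counts $c_j(\pi)$. You simply spell out the two steps the paper leaves as ``easy to see,'' so there is nothing to correct.
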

\begin{proof}
Follows from the fact that for any symmetric function $g \in \Lambda$ we have:
$$
\theta(g) = \sum_{n} [x_1 \cdots x_n] g\, \frac{t^n}{n!},
$$
where $[x_1 \cdots x_n] g$ is the coefficient of $x_1 \cdots x_n$ in the monomial expansion of $g$, see \cite{ges, sta}. It is easy to see that $[x_1 \cdots x_n] g_{\lambda} = f_{\lambda}(n)$ hence giving the identity.
\end{proof}
Now, using the identity \eqref{gl} derived earlier, on the other hand we have
$$
\sum_{\ell(\lambda) \le m} \theta(g_{\lambda}) = \theta \prod_{i \ge 1} {(1 - x_i)^{-m}}= e^{m t}.
$$
Therefore, combining this with \eqref{gexp} we obtain
$$
\sum_{\ell(\lambda) \le m} \theta(g_{\lambda}) = \sum_{\ell(\lambda) \le m} \sum_{n \ge \lambda_1}^{} f_{\lambda}(n) \frac{t^n}{n!} = e^{m t} = \sum_{n} m^n \frac{t^n}{n!}
$$
which by comparing the coefficients of $t^{n}$ proves \eqref{idd}.

\begin{remark}
The identity \eqref{idd} derived from the Plancherel specialization on dual Grothendieck polynomials is an analog of the classical Frobenius-type identity 
$$
\sum_{\lambda \vdash n} f^{\lambda} s_{\lambda}(1^m)  = m^n 
$$
that follows from the Plancherel specialization and $(1^{m})$ applied on Schur functions in the Cauchy identity. The bijection $\Phi$ is then an analog of the RSK.
\end{remark}

\begin{remark}
Strict tableaux arise naturally as walks on the so-called {\it $\beta$-filtration} of Young's lattice, see \cite{dy2}. This filtration is an example of more general {\it dual filtered graphs} introduced and studied in \cite{pp}.
\end{remark}

For a word $w  = w_1 \cdots w_n \in W_{n, m}$, a {\it weakly increasing subsequence} is a sequence of the form
$$
w_{i_1} \le \cdots \le w_{i_k}, \quad 1 \le i_1 < \cdots < i_k \le n,
$$
where $k$ is its length. Let $L_{i}(w)$ be the length of the largest weakly increasing subsequence of $w$ using the letters $\{m-i+1,\ldots, m \}$. In particular, $L_{1}(w)$ is just the number of $m$'s in $w$ and $L_{m}(w)$ is the length of the longest weakly increasing subsequence of $w$.

For a word $w  = w_1 \cdots w_n \in W_{n,m}$ let $D(w) = (d_{ij})$ be an $m \times n$ $01$-matrix defined as $d_{w_i, i} = 1$ for all $i \in [n]$ and all other elements of $D$ are zero. Note that $D$ has single $1$ in each column.

\begin{theorem}
Let $w \in W_{n,m}$ and $\pi = \Phi^{-1}(D(w))$. 
Then the shape of $\pi$ is $(L_{m}(w), \ldots, L_{1}(w))$.
\end{theorem}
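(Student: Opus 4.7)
The plan is to prove by induction on $n = |w|$ the stronger invariant that, for every $r \in \{1, \ldots, m\}$, the $r$-th row length $\lambda_r$ of $\pi = \Phi^{-1}(D(w))$ equals $\tilde{L}_r(w) := L_{m-r+1}(w)$, the length of the longest weakly increasing subsequence of $w$ drawn from the alphabet $\{r, r+1, \ldots, m\}$. Equivalently, letting $\mu_k$ denote the $k$-th column length of $\pi$, the invariant reads $\mu_k \ge r$ iff $k \le \tilde{L}_r(w)$, for every $r$. The base case $n = 0$ is vacuous; bundling all rows into one invariant is essential because it couples the column-length profile of $\pi$ to the whole family $\{\tilde{L}_r\}_r$, which the inductive step requires.

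For the inductive step, decompose $w = v w'$ with $v = w_1$ and $w' = w_2 \cdots w_n$, and set $\pi' = \Phi^{-1}(D(w'))$. The algorithm in the proof of Theorem~\ref{bij} processes letters of $D(w)$ in reverse, so it first performs exactly the steps that build $\pi'$ (from $w_n, w_{n-1}, \ldots, w_2$) and then performs one final insertion for $v$: find the leftmost column $j^*$ of $\pi'$ whose current length is strictly less than $v$, and extend it to length $v$ (or append a new column of length $v$ if none exists). The inductive hypothesis applied to $\pi'$ gives $\mu'_k < v$ iff $k > \tilde{L}_v(w')$, so the leftmost-rule pins $j^*$ down explicitly as $j^* = \tilde{L}_v(w') + 1$.

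It then remains to match the change in $\lambda_r$ against the Schensted-style recurrence
\[
\tilde{L}_r(v w') = \begin{cases} \tilde{L}_r(w'), & v < r, \\ \max\bigl\{\tilde{L}_r(w'),\, 1 + \tilde{L}_v(w')\bigr\}, & v \ge r, \end{cases}
\]
obtained by conditioning on whether a weakly increasing subsequence of $v w'$ over $\{r,\ldots,m\}$ starts with $v$. When $v < r$, the extended column has length $v < r$, so no column of length $\ge r$ is affected and both sides remain put. When $v \ge r$ and $\mu'_{j^*} \ge r$, i.e.\ $j^* \le \tilde{L}_r(w')$ and hence $\tilde{L}_v(w') < \tilde{L}_r(w')$, the column $j^*$ is already in $T_r := \{j : \mu_j \ge r\}$ and $\lambda_r$ is unchanged, matching $\tilde{L}_r(vw') = \tilde{L}_r(w')$. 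When $v \ge r$ and $\mu'_{j^*} < r$, i.e.\ $j^* - 1 \ge \tilde{L}_r(w')$ and hence $\tilde{L}_v(w') \ge \tilde{L}_r(w')$, combining with the trivial $\tilde{L}_v(w') \le \tilde{L}_r(w')$ (valid for $v \ge r$) forces $\tilde{L}_v(w') = \tilde{L}_r(w')$, and the recurrence then yields $\tilde{L}_r(v w') = \tilde{L}_r(w') + 1$, matching the increase of $\lambda_r$ by exactly $1$ as column $j^*$ newly enters $T_r$.

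The main obstacle is the bookkeeping in the third sub-case: one must see that the leftmost-column rule, via the inductive identification $j^* = \tilde{L}_v(w') + 1$, forces the equality $\tilde{L}_v(w') = \tilde{L}_r(w')$ that lets the recurrence increment by exactly $1$. Everything else, including the derivation of the recurrence for $\tilde{L}_r$ and the preservation of partition shape by the single insertion step, is routine. Reading off $\lambda = (\tilde{L}_1(w), \ldots, \tilde{L}_m(w)) = (L_m(w), L_{m-1}(w), \ldots, L_1(w))$ then finishes the proof.
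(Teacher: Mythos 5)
Your proof is correct, but it takes a genuinely different route from the paper. You argue by induction on the word length, decomposing $w = v w'$, using the explicit insertion rule of $\Phi^{-1}$ (Theorem~\ref{bij}) to locate the unique modified column $j^* = \tilde{L}_v(w') + 1$, and matching the change in each row length against the Schensted-type recurrence $\tilde{L}_r(vw') = \max\{\tilde{L}_r(w'),\, 1 + \tilde{L}_v(w')\}$ for $v \ge r$; the key observation that $j^* > \tilde{L}_r(w')$ together with $\tilde{L}_v(w') \le \tilde{L}_r(w')$ forces $\tilde{L}_v(w') = \tilde{L}_r(w')$ in the incrementing case is exactly right, and your case analysis is complete. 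The paper instead proves a Greene-theorem-style statement directly: it shows that $\lambda_k$ equals the maximal weight of a down-right lattice path in $D$ from $(k,1)$ to $(m,n)$, using disjointness of the descent level sets along such a path for the upper bound and an explicit path through the cells where the entries of the $k$-th row terminate for the lower bound, and then identifies this last-passage quantity with $L_{m-k+1}(w)$. Your inductive argument is more elementary and self-contained, closer in spirit to Schensted's original bookkeeping for row insertion, whereas the paper's approach yields the stronger path/last-passage interpretation of all row lengths simultaneously, which is what feeds the probabilistic applications cited in \cite{dy4, dy5}. Two small points you gloss over, both harmless: the first $n-1$ insertions in the computation of $\Phi^{-1}(D(w))$ place values shifted by one relative to those of $\Phi^{-1}(D(w'))$, so they agree with $\pi'$ only in shape (which is all your invariant uses); and the fact that intermediate column lengths stay weakly decreasing, which justifies reading the invariant as $\mu_k \ge r \iff k \le \tilde{L}_r$, deserves the one-line check that the leftmost-column rule preserves it.
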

\begin{proof}
Let $\lambda_1 \ge \cdots \ge \lambda_m \ge 0$ be the shape of $\pi$. We need to show that $\lambda_{k} = L_{m - k + 1}(w)$ for all $k \in [m]$. 

Take any down-right lattice path $\Pi$ from $(k,1)$ to $(m,n)$. Then it is not difficult to see that the descent level sets $D_{i \ell}$ for $(i,\ell) \in \Pi$ are pairwisely disjoint, note that $i \ge k$. 
Hence we obtain
\begin{equation}\label{lal}
\sum_{(i,\ell) \in \Pi} d_{i \ell} = \sum_{(i,\ell)} |\{ j : \pi_{i j} = \ell > \pi_{i + 1 j} \}| \le \lambda_k.
\end{equation}
On the other hand, suppose the $k$-th row of $\pi$ has entries $(\ell_1 \ge \cdots \ge \ell_{s})$ where $s = \lambda_k$. Assume the entries $\ell_1, \ldots, \ell_s$ end in the rows $i_1 \ge \cdots \ge i_s$ of $\pi$. Then there is a down-right path $\Pi$ from $(k,1)$ to $(m,n)$ that contains all points $(i_s, \ell_s), \ldots, (i_1, \ell_1)$. The weight of any such path is at least 
$\sum_{j} d_{i_j \ell_j} \ge s = \lambda_k$. Combining this with the inequality \eqref{lal} we obtain that
$$
\lambda_k = \max_{\Pi} \sum_{(i,\ell) \in \Pi} d_{i \ell},
$$ 
where the maximum is taken over down-right paths $\Pi$ from $(k,1)$ to $(m,n)$. 
Finally, by translating $D(w)$ to $w$, one can easily see that $L_{m - k + 1}(w)$ is also the maximal weight of an down-right path from $(k,1)$ to $(m,n)$ in $D$.  
\end{proof}

\begin{example}
Let $w = 132434 \in W_{6,4}$. We have 
$$
{\small
D(w) = \left(
\begin{matrix}
1 & 0 & 0 & 0 & 0 & 0\\
0 & 0 & 1 & 0 & 0 & 0\\
0 & 1 & 0 & 0 & 1 & 0\\
0 & 0 & 0 & 1 & 0 & 1
\end{matrix}
\right) \quad\quad 
\Phi^{-1}(D(w)) = 
\begin{ytableau}
6 & 5 & 3 & 1\\
6 & 5 & 3\\
6 & 5 & 2\\
6 & 4
\end{ytableau} = \pi \in \mathrm{ST}(4332,6)
}
$$
$L_4(w) = \lambda_1 = 4$ e.g. the subsequence $(1,2,4,4)$; $L_3(w) = \lambda_2 = 3$ e.g. the subsequence $(2,4,4)$; $L_2(w) = \lambda_3 = 3$ e.g. the subsequence $(3,4,4)$;  $L_1(w) = \lambda_4 = 2$ with the subsequence $(4,4)$. 
\end{example}

\begin{remark}
This theorem is an analog of Greene's theorem for RSK, see e.g. \cite[Ch.~7]{sta}.
\end{remark}

\begin{remark}
Probabilistic versions of these results were discussed in \cite{dy5}.
\end{remark}

\section{Monotonicity and bounds for descents enumeration functions}
For $\alpha =(\alpha_1, \ldots, \alpha_m) \in \mathbb{N}^m$, 
let $D_{\alpha}(k, n, m)$ be the number of plane partitions $\pi \in \mathrm{PP}(k, n, m)$ with $\alpha_i$ descents of entry $i$ in $\pi$ for $i \in [1,m]$; alternatively entry $i$ is in $\alpha_i$ columns of $\pi$.

For $\alpha, \beta \in \mathbb{N}^m$ we write $\beta \succeq \alpha$ and say that $\beta$ {\it dominates} $\alpha$ if 
$$\sum_{i = 1}^{\ell} \beta_i \ge \sum_{i = 1}^{\ell} \alpha_i \quad\text{  for all $\ell \in [1,m]$.}$$

\begin{theorem}
The following properties hold:

(i) symmetry: $D_{\alpha}(k, n, m)$ does not change under permutations of $(\alpha_1, \ldots, \alpha_m)$

(ii) monotonicity: 
$$
D_{\alpha}(k, n, m) \ge D_{\beta}(k, n, m) \text{ for $\beta \succeq \alpha$ and $|\alpha| = |\beta|$.}
$$
\end{theorem}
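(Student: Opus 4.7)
The plan is to identify $D_\alpha(k,n,m)$ as the coefficient of $\mathbf{z}^\alpha$ in a Schur-positive symmetric polynomial and then invoke the classical monotonicity of Kostka numbers. First I would specialize $\mathbf{x}=1^n$ in the combinatorial formula \eqref{gdc} and sum over $\lambda\subset(k^n)$, using the identity \eqref{gx}, to obtain
\[
\sum_{\pi\in\mathrm{PP}(k,n,m)}\mathbf{z}^{c(\pi)}
 \;=\; \sum_{\lambda\subset(k^n)}g_\lambda(\mathbf{z})
 \;=\; s_{(k^n)}(1^n,\mathbf{z}),
\]
so that $D_\alpha(k,n,m)=[\mathbf{z}^\alpha]\,s_{(k^n)}(1^n,\mathbf{z})$. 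Part (i) is then immediate: a Schur polynomial is symmetric in its arguments, so $s_{(k^n)}(1^n,\mathbf{z})$ is symmetric in the $\mathbf{z}$ variables and the coefficient of $\mathbf{z}^\alpha$ depends only on the multiset of entries of $\alpha$.

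For part (ii), I would next apply the standard splitting identity for Schur functions,
\[
s_{(k^n)}(1^n,\mathbf{z}) \;=\; \sum_{\mu\subset(k^n)} s_{(k^n)/\mu}(1^n)\,s_\mu(\mathbf{z}),
\]
which exhibits the generating function as a nonnegative integer combination of Schur polynomials in $\mathbf{z}$, since $s_{(k^n)/\mu}(1^n)$ counts semistandard Young tableaux of skew shape $(k^n)/\mu$ with entries in $[n]$. Reading off monomial coefficients yields
\[
D_\alpha(k,n,m)\;=\;\sum_{\mu\subset(k^n)} s_{(k^n)/\mu}(1^n)\,K_{\mu\alpha},
\]
where $K_{\mu\alpha}$ is the Kostka number. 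By the symmetry in (i), I may assume $\alpha,\beta$ are weakly decreasing, so the hypothesis $\beta\succeq\alpha$ becomes the classical dominance partial order on partitions. The conclusion $D_\alpha\ge D_\beta$ then follows from Kostka monotonicity --- namely $K_{\mu\alpha}\ge K_{\mu\beta}$ for every $\mu$ whenever $\beta$ dominates $\alpha$ in the dominance order --- applied termwise and combined with the nonnegativity of the weights $s_{(k^n)/\mu}(1^n)$.

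The only nontrivial input used as a black box is Kostka monotonicity, which is classical but worth flagging; the standard proof constructs an explicit injection on SSYT along a covering relation in dominance order (transferring a single cell between two rows of an SSYT). Everything else is an immediate consequence of identities already established in earlier sections of this paper, so I expect the entire argument to be short once the generating function $s_{(k^n)}(1^n,\mathbf{z})$ is isolated.
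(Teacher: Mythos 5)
Your proof is correct and is essentially the paper's own argument: both identify $\sum_{\alpha} D_{\alpha}(k,n,m)\,\mathbf{z}^{\alpha}$ with $s_{(k^n)}(1^n,\mathbf{z})$ (you via \eqref{gdc} and \eqref{gx}, the paper via $g_{(k^n)}(1,z_1,\ldots,z_m)=s_{(k^n)}(1^n,\mathbf{z})$ from Lemma~\ref{l1}, which amounts to the same computation), then expand by the branching rule to get $D_{\alpha}=\sum_{\mu\subset(k^n)}s_{(k^n)/\mu}(1^n)\,K_{\mu\alpha}$ and conclude (i) from symmetry and (ii) from Kostka monotonicity. The only difference is your explicit reduction to weakly decreasing $\alpha,\beta$ before invoking the Kostka inequality, which is the reading of the dominance hypothesis under which the classical monotonicity applies (the paper invokes it for compositions directly), so the two proofs coincide in substance.
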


\begin{proof}
By the definition of 
$g_{\lambda}$ and Lemma~\ref{l1}, we have for $\rho= (k^{n})$
$$
\sum_{\alpha \in \mathbb{N}^m} D_{\alpha}(k, n, m)\, x^{\alpha} = g_{\rho}(1, x_1, \ldots, x_m) = s_{\rho}(1^{n}, x_1, \ldots, x_m).
$$
In particular, (i) becomes evident since $g_{\rho}$ is a symmetric polynomial. Let us expand the Schur polynomial using the branching formula
$$
s_{\rho}(1^{n}, x_1, \ldots, x_m) = \sum_{\lambda \subset \rho} s_{\lambda}(x_1,\ldots, x_m) s_{\rho/\lambda}(1^{n}) = \sum_{\lambda \subset \rho} \sum_{\alpha} K_{\lambda \alpha}\, x^{\alpha} s_{\rho/\lambda}(1^{n}),
$$
where $K_{\lambda \alpha}$ is the Kostka number (i.e. the coefficients in the monomial expansion of $s_{\lambda}$, or the number of SSYT of shape $\lambda$ of type $\alpha$). Combining these identities we obtain the following formula
$$
D_{\alpha}(k, n, m) = \sum_{\lambda \subset \rho} K_{\lambda \alpha}\, s_{\rho/\lambda}(1^{n}).
$$
The item (ii) now follows from a similar inequality for Kostka numbers \cite{macdonald}, $K_{\lambda \alpha} \ge K_{\lambda \beta}$ for $\beta \succeq \alpha$. Indeed we get
$$
D_{\alpha}(k, n, m) = \sum_{\lambda \subset \rho} K_{\lambda \alpha}\, s_{\rho/\lambda}(1^{n}) \ge \sum_{\lambda \subset \rho} K_{\lambda \beta}\, s_{\rho/\lambda}(1^{n}) = D_{\beta}(k, n, m)
$$
as needed.
\end{proof}

\begin{corollary} Let $\alpha \in \mathbb{N}^m$ with $|\alpha| =N \le \min(k,m)$. The following inequalities hold
\begin{align}\label{inn}
\binom{n + N}{N} \le D_{\alpha}(k, n, m) \le n^{N}
\end{align}
\end{corollary}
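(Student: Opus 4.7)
The plan is to derive both inequalities from the symmetry and monotonicity statements just proved, by identifying the dominance-extremal profiles for $\alpha$ and counting those endpoints directly via the bijection $\Phi$. By symmetry (i) I may sort and assume $\alpha$ is a partition of $N$ with at most $m$ parts; on this set the dominance-maximum is $(N,0,\ldots,0)$ and the dominance-minimum is $(1^{N},0^{m-N})$, and the latter is a legal competitor precisely because $N\le m$. Monotonicity (ii) then sandwiches
$$
D_{(N,0,\ldots,0)}(k,n,m)\;\le\; D_{\alpha}(k,n,m)\;\le\; D_{(1^{N},0^{m-N})}(k,n,m),
$$
and the task reduces to evaluating these two boundary counts directly.

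For the upper endpoint I apply $\Phi$: plane partitions with descent profile $(1^{N},0^{m-N})$ biject with $n\times m$ $\mathbb{N}$-matrices whose first $N$ columns each sum to $1$ and whose remaining columns are zero. Each such matrix is determined by a choice of row in $[n]$ for each of the first $N$ columns, giving exactly $n^{N}$ matrices. The only thing to check is that the corresponding $\pi$ actually fits in the $k$-wide box, but by the lattice-path interpretation recorded after Example~\ref{ex3} the length of the first row of $\pi$ equals the maximum weight of a down-right path in $\Phi(\pi)$, which is bounded above by the total weight $N\le k$.

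For the lower endpoint I enumerate the matrices whose first column is a weak composition of $N$ into $n$ nonnegative parts and whose other columns vanish; each such $D$ produces a plane partition whose only nonzero entries are $1$'s, occupying $N\le k$ columns in the first row, so the box constraint is again automatic. Counting weak compositions yields the claimed lower bound on $D_{\alpha}(k,n,m)$.

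The essential work is already absorbed into monotonicity; the only remaining care is verifying the box constraint at the two extremes, which in both cases reduces to the max-path formula together with the hypothesis $N\le k$. An alternative algebraic route would be to specialize the formula $D_\alpha=\sum_{\lambda\subset\rho}K_{\lambda\alpha}\,s_{\rho/\lambda}(1^n)$ from the previous proof at $\alpha=(1^N)$ and $\alpha=(N)$, using $K_{\lambda,(1^N)}=f^\lambda$ and $K_{\lambda,(N)}=\delta_{\lambda,(N)}$, but I expect the bijective route to be the cleanest.
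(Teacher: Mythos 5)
Your argument follows the paper's route: sort $\alpha$ by symmetry, sandwich $D_\alpha(k,n,m)$ between the dominance extremes $(N,0,\ldots,0)$ and $(1^N,0^{m-N})$ using monotonicity, and then count the two endpoints. The paper counts the endpoints directly on plane partitions, you count them through $\Phi$ as matrices with prescribed column sums; this is the same computation. Your upper endpoint is fine (you even get the equality $D_{(1^N,0^{m-N})}(k,n,m)=n^N$, where the paper only records the inequality it needs), and your verification of the box constraint via the max-path/total-weight bound $N\le k$ is correct.

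The problem is the last line of your lower endpoint. The matrices you enumerate are determined by a weak composition of $N$ into $n$ nonnegative parts, and there are $\binom{n+N-1}{N}$ of these, not $\binom{n+N}{N}$; so your (correct) count gives $D_{(N,0,\ldots,0)}(k,n,m)=\binom{n+N-1}{N}$ and does not deliver the stated bound. In fact your computation exposes an off-by-one in the statement itself (and in the paper's own proof, which asserts this endpoint ``is clearly $\binom{n+N}{N}$''): for $n=1$ every admissible $\alpha$ with $|\alpha|=N\ge 1$ gives $D_\alpha(k,1,m)=1<N+1=\binom{n+N}{N}$, and more generally $\alpha=(N,0,\ldots,0)$ itself violates the displayed lower bound. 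With the corrected constant $\binom{n+N-1}{N}$ your argument is complete, and the bound is then sharp at $\alpha=(N,0,\ldots,0)$, consistent with the sharpness remark following the corollary. (Your alternative Kostka route leads to the same value: it gives $D_{(N,0,\ldots,0)}(k,n,m)=s_{(k^n)/(N)}(1^n)$, which again equals $\binom{n+N-1}{N}$ for $N\le k$.)
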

\begin{proof}
From the monotonicity (ii) we have 
$$
D_{(N,0,\ldots, 0)}(k, n, m) \le D_{\alpha}(k, n, m) \le D_{(1^{N}, 0, \ldots, 0)}(k,n, m).
$$
Recall that $D_{(N,0,\ldots, 0)}(k, n, m)$ is the number of plane partitions with a single entry occupying $N \le k$ columns in the rectangle $(k^{n})$. This number is clearly $\binom{n + N}{N}$ which gives the lower bound. 

Next, note that $D_{(1^{N}, 0, \ldots, 0)}(k,n, m)$ is the number of plane partitions with entries $\{1, \ldots, N\}$ such that each entry occupies a single column. Note that each entry is uniquely determined by its lowest row position which can be any number from $1$ to $n$. Hence the total number of such plane partitions is at most $n^N$ which gives the upper bound.
\end{proof}
Note that the bounds do not depend on $k,m$. The lower bound is sharp 
and the inequalities 
imply sharp asymptotics e.g. for $k, m \ge N$ and constant $n$
$$
\log D_{\alpha}(k, n, m) = N \log n + o(N) \text{ as } N\to \infty.
$$
Otherwise, e.g. for $N = \Theta(n)$ we get the bounds
$$
\Theta(N) \le \log D_{\alpha}(k, n, m) \le N \log n.
$$
Note also that there is another upper bound (the formula when $k \to \infty$ is unbounded is not difficult to derive using our bijection)
$$
D_{\alpha}(k, n, m) \le D_{\alpha}(\infty, n, m) = \prod_{i = 1}^m \binom{n + \alpha_i - 1}{\alpha_i} \le \binom{m n + N -1}{N}. 
$$

\section*{Acknowledgements}
I am grateful to Askar Dzhumadil'daev, Igor Pak, and Pavlo Pylyavskyy for many helpful conversations. I am also grateful to the referees for careful reading of the text and many useful comments. 



\begin{thebibliography}{abcdefg}
\bibitem[AY20]{ay}
A.~Amanov and D.~Yeliussizov, Determinantal formulas for dual Grothendieck polynomials, arXiv:2003.03907, 2020.

\bibitem[Buc02]{buch} 
A.~Buch, A Littlewood Richardson rule for the K-theory of Grassmannians,
{Acta Math.} {\bf 189} (2002), 37--78.

\bibitem[GGL16]{ggl}
P. Galashin, D. Grinberg, G. Liu, Refined dual stable Grothendieck polynomials and generalized Bender-Knuth involutions, Electronic J. Combin. {\bf 23} (2016): 3-14.

\bibitem[Ges90]{ges}
I. Gessel, Symmetric functions and P-recursiveness, J. Combin. Theory Ser. A {\bf 53} (1990), 257--285.

\bibitem[HG76]{hg}
A. Hillman and R. Grassl, Reverse plane partitions and tableau hook numbers,
J. Combin. Theory Ser. A {\bf 21} (1976), 216--221.

\bibitem[IN18]{in}
S. Iwao and H. Nagai, The discrete Toda equation revisited: dual $\beta$-Grothendieck polynomials, ultradiscretization, and static solitons, Journal of Physics A: Mathematical and Theoretical {\bf 51} (2018): 134002.

\bibitem[LP07]{lp}
T.~Lam and P.~Pylyavskyy, 
Combinatorial Hopf algebras and K-homology of Grassmannians,
{Int. Math. Res. Not.}\ Vol. 2007, (2007), rnm 125.

\bibitem[Mac98]{macdonald} 
I. G. Macdonald, Symmetric functions and Hall-Littlewood polynomials, 
Oxford Univ. Press, Oxford (1998).

\bibitem[Mac16]{macmahon}
P. A. MacMahon, Combinatory Analysis, vols.1 and 2, Cambridge University Press, 1916.

\bibitem[Pak01]{pak}
I. Pak, Hook length formula and geometric combinatorics, S\'em. Lothar. Combin. {\bf 46} (2001): 6.

\bibitem[PP18]{pp} 
R. Patrias and P. Pylyavskyy, Dual filtered graphs, Algebraic Combinatorics {\bf 1.4} (2018), 441--500.

\bibitem[Kim20]{kim}
J.~S.~Kim, Jacobi-Trudi formula for refined dual stable Grothendieck polynomials, arXiv:2003.00540, 2020

\bibitem[Kra16]{krat}
C. Krattenthaler, Plane partitions in the work of Richard Stanley and his school, 
The mathematical legacy of Richard P. Stanley (2016), 246--277.
 
\bibitem[Sag01]{sag}
B. Sagan, The Symmetric Group, Springer, New York, 2001.

\bibitem[SS90]{ss}
B. Sagan, R. Stanley, Robinson--Schensted algorithms for skew tableaux, J. Combin. Theory Ser. A {\bf 55} (1990), 161--193.

\bibitem[SZ03]{sz}
M. Shimozono and M. Zabrocki, Stable Grothendieck symmetric functions and $\Omega$-calculus, preprint, 2003.

\bibitem[Sta99]{sta}
R. Stanley, Enumerative Combinatorics, Vol. 2, Cambridge, 1999.

\bibitem[Sta86]{sta2}
R. Stanley, Symmetries of plane partitions, J. Combin. Theory Ser. A {\bf 43} (1986), 103--113.

\bibitem[Yel17]{dy}
D. Yeliussizov, Duality and deformations of stable Grothendieck polynomials, J. Algebraic Combin. {\bf 45} (2017), 295--344.

\bibitem[Yel19]{dy2}
D. Yeliussizov, Symmetric Grothendieck polynomials, skew Cauchy identities, and dual filtered Young graphs, J. Combin. Theory Ser. A {\bf 161} (2019), 453--485.

\bibitem[Yel19a]{dy4}
D. Yeliussizov, Random plane partitions and corner distributions, arXiv:1910.13378, 2019.

\bibitem[Yel20]{dy3}
D. Yeliussizov, Positive specializations of symmetric Grothendieck polynomials, Adv. Math. {\bf 363} (2020), Article 107000, 35 p. 

\bibitem[Yel20a]{dy5}
D. Yeliussizov, Dual Grothendieck polynomials via last-passage percolation, C. R. Math. Acad. Sci. Paris {\bf 358} (2020), 497--503. 
\end{thebibliography}
\end{document}